\newtheorem{proposition}{Proposition}
\newtheorem*{remark}{Remark}
\newcommand{\R}{\mathbb{R}}
\newcommand{\Z}{\mathbb{Z}}
\newcommand{\kin}{\mathcal{E}_{kin}}
\title{Conservative stabilized Runge-Kutta methods for the Vlasov-Fokker-Planck equation}
\author[a]{Ibrahim Almuslimani}
\author[b]{Nicolas Crouseilles}
\affil[a]{Univ Rennes, Inria (MINGuS)  \& IRMAR UMR $6625$, France}
\affil[b]{Univ Rennes, Inria (MINGuS)  \& IRMAR UMR $6625$ \&  ENS Rennes, France}
\date{}
\begin{document}

\maketitle


\begin{abstract}
In this work, we aim at constructing numerical schemes, that are as efficient as possible in terms of cost and conservation of invariants, for the Vlasov--Fokker--Planck system coupled with Poisson or Amp\`ere equation. Splitting methods are used where the linear terms in space are treated by spectral or semi-Lagrangian methods and the nonlinear diffusion in velocity in the collision operator is treated using a stabilized Runge--Kutta--Chebyshev (RKC) integrator, a powerful alternative of implicit schemes. The new schemes are shown to exactly preserve mass and momentum. The conservation of total energy is obtained using a suitable approximation of the electric field. An H-theorem is proved in the semi-discrete case, while the entropy decay is illustrated numerically for the fully discretized problem. Numerical experiments that include investigation of Landau damping phenomenon and bump-on-tail instability are performed to illustrate the efficiency of the new schemes.

\bigskip

\noindent
{\it Keywords:\,}
explicit stabilized integrators, RKC methods, kinetic equations, Vlasov--Poisson system, Vlasov--Fokker--Planck equation, Landau damping.
\smallskip

\noindent
{\it AMS subject classification (2010):\,}
35Q83, 35Q84, 65M12.
\end{abstract}

	\section{Introduction}
We consider the following model satisfied by $f(t, x, v)$ with the time $t\geq 0$, the space variable $x\in\mathbb{R}^{d_x}$ 
and the velocity variable $v\in\mathbb{R}^{d_v}$ 
\begin{equation}
\label{vp_fp}
\partial_t f + v\cdot \nabla_x f + E\cdot \nabla_v f =\nu \nabla_v \cdot ((v-u_f)f + T_f\nabla_v f) =:\nu Q(f), 
\end{equation}
where $\nu>0$ is the collision rate, the electric field $E(t, x)=-\nabla_x\phi(t, x)$ is obtained from the Poisson equation 
$\Delta_x \phi(t, x) = \int_{\mathbb{R}^{d_v}} f(t, x, v) dv -1$, the mean velocity $u_f(t, x)$ and temperature $T_f(t, x)$ 
are defined by 
\begin{eqnarray}
\label{eq:Tu}
n_f u_f(t, x) &=& \int_{\mathbb{R}^{d_v}} v f(t, x, v) dv, \nonumber\\
n_f T_f(t, x) &=&\frac{1}{d_v}\int_{\mathbb{R}^{d_v}} |v-u_f(t, x)|^2 f(t, x, v) dv\nonumber\\
\mbox{with the density } && n_f(t, x)=\int_{\mathbb{R}^{d_v}} f(t, x, v) dv. 
\end{eqnarray} 

The goal of this work is to construct efficient numerical grid based schemes (ie using a grid of the phase space) 
for the numerical approximation of \eqref{vp_fp}.  
Even if these methods are costly, their high accuracy enables a very good description of the phase space even in low density 
regions. Moreover, stability conditions relating space or velocity mesh to the time step usually lead to high CPU time, 
in particular when dissipative collision operators are considered. 
To overcome this drawback, we aim at using stabilized Runge-Kutta-Chebyshev (RKC) methods for the numerical simulations 
of collisional Vlasov equations  \eqref{vp_fp}.   

RKC methods are explicit methods with an extended stability domain along the real negative axis which are very well suited 
for the time integration of  problems whose eigenvalues lie in a narrow strip around the negative real axis, 
which is typically the case for parabolic problems. Indeed, when one is interested in high dimensional or/and nonlinear parabolic problems, 
implicit method require to solve large or/and nonlinear systems which may become very costly from a computational point of view. 
Inversely, explicit methods do not have to invert complex systems, but at the cost of a severe condition on  the time step. 
Additionally, when the system comes from the discretization of a PDE which enjoys suitable properties, 
explicit methods may be preferred since the properties can be easily extended to the fully discrete case, and 
implicit methods may break the properties of the underlying numerical schemes.  
Hence, RKC methods turn out to be a good compromise since they are explicit and their domain of stability has a length proportional to $s^2$ with $s$ is the number of stages, which allow for the use of large time steps (see for instance \cite{Abdrev,SSV98,HaW96}). 

Compared with high-order RK schemes, stabilized RKC schemes require few function calls, 
and have an extended stability region along the negative real axis, which covers the eigenvalues of the dissipative collision term more efficiently and allows larger time steps. Even if the error is of second order, the global error is relatively small 
when the full system is considered. In addition, to optimize time stepping, the second order RKC scheme is coupled with adaptive 
time stepping strategy: the time step is automatically chosen according to a local error estimate, and the number of stages is dynamically adjusted.

In the context of Vlasov equation, solving large systems is not acceptable due to the high dimensionality of the equations. 
Standard explicit methods, due to the fact that they require small time steps, also lead to a large number of iterations and then 
to a large cost. In plasma physics, even if binary collisions are not the main effect compared to the mean field interaction, 
they are not negligible and have been considered in the non homogeneous case using Eulerian methods in 
\cite{filbet_pareschi, cf, gamba}. Collisions are currently taken into account for example in gyrokinetic simulations but 
in this context, a more simple choice to treat collisions consistently is to use the Fokker-Planck type operators which leads to 
a second order parabolic equation (see \cite{hammet, gene, multi_gene, gysela_coll}). As such, they have 
a moderate levels of stiffness, their spectrum belongs to the negative half space and is contained in a strip around the negative real axis.

For such collision operators, the use of implicit methods can be a strong obstacle due to cost and memory aspects 
but explicit methods may lead to very stringent stability constraints that also increase the computational cost. 
Hence, stabilized Runge-Kutta methods turn out to be an interesting alternative since in addition 
the conservation properties of the discretized collision operator are simply transferred to the fully discrete case. 
{Let us quote \cite{ulbl, angus} in which the authors use a suitable finite volume strategy to ensure the conservations 
	coupled with standard explicit Runge-Kutta method.} Let us also mention that stabilized Runge-Kutta methods 
have been already used in \cite{filbet_pareschi} for Vlasov-Poisson-Landau 
and in \cite{gene, multi_gene} for collisional gyrokinetic case. However in these works, 
the time integrator was not the main point and is therefore briefly discussed and one of the objectives of this work 
is to explain why these time integrators are a viable way for the weakly collisional plasma simulations. 

The rest of the paper is organized as follows: first the velocity discretization of the Fokker-Planck operator is 
presented ; second, the explicit stabilized Runge-Kutta methods are discussed and in a third part, the coupling between 
the Vlasov and collision part is presented. Finally, a bunch of numerical results are presented in different configurations 
to highlight the advantages of the approach. 

\section{Approximation of the Fokker-Planck operator}
\label{qf}
Here, we recall the Fokker-Planck collision operator and its properties (conservation and H-theorem) 
at the continuous level. Then, we present the velocity discretization and the properties at the semi-discrete level 
(discrete in velocity and continuous in time). Extensions to higher order and higher dimensions are provided.

The collision operator $Q(f)$ can be rewritten in the standard form, $\log$ form or $L^2$ form 
\begin{equation}
\label{reformulation_Q}
Q(f) = \nabla_v \cdot ((v-u_f) f +T_f \nabla_v f) = \nabla_v \cdot \Big(T_f f \nabla_v \log\Big(\frac{f}{M_f}\Big)\Big) = \nabla_v \cdot \Big(TM_f \nabla_v \Big(\frac{f}{M_f}\Big)\Big),   
\end{equation}
with $n_f =\int f dv, n_fu_f =\int vf dv, d_v n_f T_f=\int |v-u_f|^2 f dv$ and $M_f$ denotes the Maxwellian associated to $f$  
$$
M_f(v) = \frac{1}{(2\pi T_f)^{d/2}} \exp\Big(-\frac{|v-u_f|^2}{2T_f}\Big). 
$$
The operator $Q(f)$ enjoys the following properties 
$$
\int (1, v, |v|^2) Q(f) dv = 0, \;\; \int \log(f) Q(f) dv \leq 0, \;\; \int Q(f)\frac{f}{M_f}  dv \leq 0,  
$$
which is important to preserve at the numerical level. However, since the Fokker-Planck operator has been widely studied, 
there exists some suitable discretizations in the literature 
which ensure some of these properties. In particular, in \cite{dellacherie}, the authors made the link between the different formulations 
\eqref{reformulation_Q} of the Fokker-Planck operator (standard form, $\log$ form or $L^2$ form in \eqref{reformulation_Q}). 
According to the chosen form, the discretization will enjoy conservation properties or entropy decreasing property. 
In \cite{dellacherie}, the entropic averaging is proposed to make the link between the standard and $\log$ forms. 
However, using the $\log$ form may impose strong stability constraints on the numerical scheme to ensure the positivity of 
the solution. Thus, we follow the same path as in \cite{dellacherie} 
to make the link between the standard form (well suited for the conservations) and the $L^2$ form (well adapted for the entropy). 
{Let us mention some related works \cite{ulbl, angus} in which a second order finite volume method is used with a suitable 
	corrections to ensure conservations. }

\subsection{Second order numerical scheme}
To present the numerical scheme, we restrict ourselves to the case $d=1$ with $v_j= j\Delta v$ ($j\in\mathbb{Z}$ and $\Delta v>0$ the velocity mesh). 
We also introduce the  discrete unknown $f_j\approx f(v_j)$. 
We look for a discrete function $\tilde{M}_{j+1/2}$ (which is supposed to approximate a Maxwellian) such that the discretization 
of the standard form $\partial_v ((v-u_f) f +T_f \partial_v f)$ and the discretization of the $L^2$ form 
$\partial_v  \Big(T_f M_f \partial_v \Big(\frac{f}{M_f}\Big)\Big)$ are equivalent. 
For the standard form, we consider the following discretization  
\begin{equation}
\label{form1}
Q_j = \frac{1}{\Delta v} \Big[f_{j+1/2} (v_{j+1/2}-\tilde{u}) - f_{j-1/2} (v_{j-1/2}-\tilde{u})  \Big] + \frac{\tilde{T}}{\Delta v^2}(f_{j+1}-2f_j+f_{j-1}), 
\end{equation}
with 
\begin{equation}
\label{average_f}
f_{j+1/2} = \frac{f_{j+1} + f_j}{2}, \;\; \mbox{ and }  \;\; v_{j+1/2} = \frac{v_j+v_{j+1}}{2}, 
\end{equation}
whereas for the $L^2$ form, we consider the following discretization  
\begin{equation}
\label{form2}
Q_j =\frac{\tilde{T}}{\Delta v^2} \left\{ \tilde{M}_{j+1/2}\Big[\Big(\frac{f}{M}\Big)_{j+1} -\Big(\frac{f}{M}\Big)_{j} \Big] -\tilde{M}_{j-1/2}\Big[\Big(\frac{f}{M}\Big)_{j} -\Big(\frac{f}{M}\Big)_{j-1} \Big]  \right\}. 
\end{equation}
We need now to define the different quantities involved in the numerical schemes: first, for the macroscopic quantities $\tilde{u}$ and $\tilde{T}$,  
we choose  (following \cite{dellacherie}) 
\begin{equation}
\label{def_UT}
{n} = \sum_j f_{j} \Delta v, \;\;\;\; {n}\tilde{u} = \sum_j v_{j+1/2} f_{j+1/2} \Delta v \;\; \mbox{ and }  \; n \tilde{T} = \sum_j (v_{j+1/2} -\tilde{u})^2 f_{j+1/2} \Delta v. 
\end{equation}
and second the discrete Maxwellian $M_j$ is defined as 
\begin{equation}
\label{def_maxw}
M_j = M(v_j) = \frac{{n}}{\sqrt{2\pi\tilde{T}}} \exp(-|v_{j} -\tilde{u}]^2/ (2\tilde{T})).  
\end{equation}
As shown in \cite{dellacherie} (the calculation are recalled in \ref{proof_prop})), the first formulation \eqref{form1} enables to prove easily the conservations of mass, momentum and energy 
whereas the second one \eqref{form2}  enables to prove easily the entropy inequality. Hence, the goal 
is to find an expression of $\tilde{M}_{j+1/2}$ such that the two expressions are equivalent. 

We introduce the flux 
\begin{equation}
\label{flux_form1}
{\cal F}_{j+1/2} =  f_{j+1/2} (v_{j+1/2}-\tilde{u}) +\frac{\tilde{T}}{\Delta v} (f_{j+1} - f_j), 
\end{equation}
so that  \eqref{form1} can be expressed as $Q_j = ({\cal F}_{j+1/2}- {\cal F}_{j-1/2})/\Delta v$. Similarly, 
defining 
\begin{equation}
\label{flux_form2}
{\cal G}_{j+1/2} = \frac{\tilde{T}}{\Delta v}  \tilde{M}_{j+1/2}\Big[\Big(\frac{f}{M}\Big)_{j+1} -\Big(\frac{f}{M}\Big)_{j} \Big], 
\end{equation}
enables to express \eqref{form2} as $Q_j=({\cal G}_{j+1/2}  - {\cal G}_{j-1/2} )/\Delta v$. 
Hence, we identify  ${\cal F}_{j+1/2}$ and ${\cal G}_{j+1/2}$ to define $\tilde{M}_{j+1/2}$   
\begin{eqnarray*}
	{\cal G}_{j+1/2} \equiv \frac{\tilde{T}}{\Delta v}  \tilde{M}_{j+1/2}\Big(\frac{f_{j+1} M_j - f_jM_{j+1} }{M_j M_{j+1}}\Big)  \!\!&=&\!\!    f_{j+1/2} (v_{j+1/2}-\tilde{u}) +\frac{\tilde{T}}{\Delta v} (f_{j+1} - f_j)\equiv {\cal F}_{j+1/2}
\end{eqnarray*}
which enables to define $\tilde{M}_{j+1/2}$ as 
\begin{equation}
\label{def_M}
\tilde{M}_{j+1/2} =  \frac{\Delta v}{\tilde{T}}  \frac{M_jM_{j+1} {f}_{j+1/2} (v_{j+1/2} - \tilde{u})}{f_{j+1}M_j - f_jM_{j+1}} + \frac{M_j M_{j+1} (f_{j+1} - f_j)}{f_{j+1}M_j - f_jM_{j+1}}.  
\end{equation}
Hence, the standard discretization \eqref{form1} is equivalent to the discretization \eqref{form2} where $\tilde{M}_{j+1/2}$ is defined by \eqref{def_M} 
which enables us to prove the following Proposition.  
\begin{proposition}
	\label{prop_conservation}
	The discretization \eqref{form1}-\eqref{average_f}-\eqref{def_UT} of the Fokker-Planck operator $Q$ given by \eqref{reformulation_Q} satisfies 
	$$
	\sum_{j\in \mathbb{Z}} Q_j 
	\left( 
	\begin{array}{lll}
	1\\ v_j\\ v_j^2/2
	\end{array}
	\right) \Delta v = 
	\left( 
	\begin{array}{lll}
	0\\ 0\\ 0 
	\end{array}
	\right)  \;\;\;\; \mbox{ and } \;\;\;\; \sum_{j\in \mathbb{Z}} Q_j  \frac{f_j}{M_j }\Delta v \leq 0,  
	$$
	which means that the discrete entropy ${\cal E}(t) = \Delta v \sum_{j\in \mathbb{Z}} (f^2_j)/M_j$ satisfies $\frac{d}{dt} {\cal E}(t) \leq 0$.  
\end{proposition}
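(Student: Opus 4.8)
The plan is to exploit the two equivalent flux forms established just above the statement: the standard form \eqref{form1}, written as $Q_j=(\mathcal F_{j+1/2}-\mathcal F_{j-1/2})/\Delta v$ with $\mathcal F$ given by \eqref{flux_form1}, is tailored to the conservations, while the $L^2$ form \eqref{form2}, written as $Q_j=(\mathcal G_{j+1/2}-\mathcal G_{j-1/2})/\Delta v$ with $\mathcal G$ given by \eqref{flux_form2}, is tailored to the entropy. Since the identification \eqref{def_M} guarantees $\mathcal G_{j+1/2}=\mathcal F_{j+1/2}$, I may freely pick whichever representation is convenient for each identity. Throughout I assume the decay at infinity (or periodicity) needed to discard boundary terms in the telescoping and summation-by-parts steps.

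For the conservations I work with $\mathcal F$. Testing against $1$ gives $\sum_j Q_j\Delta v=\sum_j(\mathcal F_{j+1/2}-\mathcal F_{j-1/2})=0$ by telescoping, which is mass conservation. For momentum and energy I use discrete summation by parts: with $v_{j+1}-v_j=\Delta v$ and $v_{j+1}^2-v_j^2=2\Delta v\,v_{j+1/2}$ this yields $\sum_j v_jQ_j\Delta v=-\Delta v\sum_j\mathcal F_{j+1/2}$ and $\sum_j\tfrac{v_j^2}{2}Q_j\Delta v=-\Delta v\sum_j v_{j+1/2}\mathcal F_{j+1/2}$, so it suffices to show that both weighted flux sums vanish. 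In the first, the diffusive part $\tfrac{\tilde T}{\Delta v}(f_{j+1}-f_j)$ telescopes away, and the advective part $\sum_j f_{j+1/2}(v_{j+1/2}-\tilde u)$ vanishes because $\sum_j v_{j+1/2}f_{j+1/2}\Delta v=n\tilde u$ while $\sum_j f_{j+1/2}\Delta v=n$, both by \eqref{def_UT}. In the second, the advective part equals $n\tilde T/\Delta v$ by the midpoint definition of $\tilde T$ in \eqref{def_UT}, whereas an index shift turns the diffusive part into $-n\tilde T/\Delta v$, so the two cancel. The key structural point is that $\tilde u$ and $\tilde T$ are evaluated at the half-points, which is exactly what closes these sums.

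For the entropy I switch to $\mathcal G$ and test against $g_j:=f_j/M_j$. Summation by parts gives
$$\sum_j Q_j\frac{f_j}{M_j}\Delta v=-\sum_j\mathcal G_{j+1/2}\big(g_{j+1}-g_j\big)=-\sum_j\frac{\tilde T}{\Delta v}\,\tilde M_{j+1/2}\big(g_{j+1}-g_j\big)^2,$$
so the sign of the entropy production is governed by the single quantity $\tilde M_{j+1/2}$: the inequality $\sum_j Q_j(f_j/M_j)\Delta v\le 0$, and hence $\tfrac{d}{dt}\mathcal E(t)\le 0$ for $\mathcal E=\Delta v\sum_j f_j^2/M_j$, holds as soon as $\tilde M_{j+1/2}\ge 0$ (recall $\tilde T>0$).

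The main obstacle is therefore the positivity of the discrete average $\tilde M_{j+1/2}$ defined by \eqref{def_M}. Writing $f_{j+1}M_j-f_jM_{j+1}=M_jM_{j+1}(g_{j+1}-g_j)$, the definition collapses to $\tilde M_{j+1/2}=\tfrac{\Delta v}{\tilde T}\,\mathcal F_{j+1/2}/(g_{j+1}-g_j)$, so positivity is equivalent to $\mathcal F_{j+1/2}$ and $g_{j+1}-g_j$ carrying the same sign (the value being fixed by continuity in the degenerate case $g_{j+1}=g_j$, where $\mathcal F_{j+1/2}$ vanishes as well). This is the delicate step, and it is here that the positivity $f_j>0$ of the discrete distribution and the explicit Gaussian shape \eqref{def_maxw} of $M_j$ enter. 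Using the ratio identity $M_{j+1}/M_j=\exp\!\big(-\tfrac{\Delta v}{\tilde T}(v_{j+1/2}-\tilde u)\big)$, I would reduce the sign condition to an inequality for the single ratio $f_{j+1}/f_j$, following the construction of \cite{dellacherie} recalled in \ref{proof_prop}. Everything else is routine discrete calculus; the crux is guaranteeing $\tilde M_{j+1/2}\ge 0$.
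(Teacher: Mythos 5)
Your treatment of the conservation laws and of the entropy-production identity is correct and is essentially the paper's own argument: the paper likewise telescopes the flux form for mass, substitutes $v_j=v_{j\pm1/2}\mp\Delta v/2$ (your summation by parts of $\mathcal{F}_{j+1/2}$ against $v_j$ and $v_j^2/2$ is the same computation) and closes the sums with the half-point definitions \eqref{def_UT}, and it derives the same quadratic form $-\sum_j\frac{\tilde T}{\Delta v}\,\tilde M_{j+1/2}\big(g_{j+1}-g_j\big)^2$ for the entropy. Your reduction $\tilde M_{j+1/2}=\frac{\Delta v}{\tilde T}\,\mathcal F_{j+1/2}/(g_{j+1}-g_j)$ is also a correct restatement of \eqref{def_M}.

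The genuine gap is the positivity of $\tilde M_{j+1/2}$, which you correctly single out as the crux but do not prove, and the route you sketch would fail. In particular, your continuity claim in the degenerate case is false: if $g_{j+1}=g_j$, i.e. $f_{j+1}/f_j=M_{j+1}/M_j=e^{-x}$ with $x=\Delta v\,(v_{j+1/2}-\tilde u)/\tilde T$, then
\begin{equation*}
\mathcal F_{j+1/2}=\frac{f_j\tilde T}{\Delta v}\Big(\frac{(1+e^{-x})x}{2}+e^{-x}-1\Big)=\frac{f_j\tilde T}{\Delta v}\Big(\frac{x^3}{12}+{\cal O}(x^4)\Big)\neq 0\quad\text{for small }x\neq 0,
\end{equation*}
so $\mathcal F_{j+1/2}$ does \emph{not} vanish where $g_{j+1}-g_j$ does. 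Both quantities are monotone in the ratio $r=f_{j+1}/f_j$, but their zeros, $r_*=(1-x/2)/(1+x/2)$ and $r_{**}=e^{-x}$, differ at order $x^3$; for every $\Delta v>0$ there is a window of ratios $r$ between them on which $\mathcal F_{j+1/2}$ and $g_{j+1}-g_j$ carry opposite signs, i.e. $\tilde M_{j+1/2}<0$. Hence no unconditional inequality on the single ratio $f_{j+1}/f_j$ can deliver $\tilde M_{j+1/2}\geq 0$ for all positive data: positivity is intrinsically an asymptotic statement. This is exactly how the paper closes the argument at the end of \ref{proof_prop}: Taylor expansions of \eqref{def_M} for a smooth positive $f$ give $\tilde M_{j+1/2}=M(v_{j+1/2})+{\cal O}(\Delta v^2)$, with $M$ the Maxwellian \eqref{def_maxw}, whence $\tilde M_{j+1/2}>0$ for $\Delta v$ small enough. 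You need this consistency computation (or an equivalent smallness hypothesis) to legitimize the sign of the quadratic form; as written, your entropy inequality is unjustified.
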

The proof is given in \ref{proof_prop}.

\subsection{Fourth order numerical scheme}
Based on the discretization \eqref{form1} of $Q$, the goal of this part is to derive a higher order velocity discretization for $Q$. 
First, integrating the standard form of $Q$ on $[v_{j-1/2}, v_{j+1/2}]$ leads to 
\begin{eqnarray*}
	\bar{Q}_j &\equiv & \frac{1}{\Delta v}\int_{v_{j-1/2}}^{v_{j+1/2}} Q(f)(v) dv \nonumber\\
	&=& \frac{1}{\Delta v}\Big((v_{j+1/2} -u_f) f(v_{j+1/2}) - (v_{j-1/2}-u_f)  f(v_{j-1/2}) + T_f (\partial_vf(v_{j+1/2}) -\partial_vf (v_{j-1/2}))\Big), 
\end{eqnarray*} 
where the notations $u_f, T_f$ are given by \eqref{eq:Tu}.  
Using the following fourth order finite difference formula 
\begin{eqnarray}
\label{midpoint}
f(v_{j+1/2}) &\approx& \breve{f}_{j+1/2} =-\frac{1}{16} f_{j-1} +\frac{9}{16} f_j +\frac{9}{16} f_{j+1}  -\frac{1}{16} f_{j+2}, \\
(\partial_v f)(v_{j+1/2}) &\approx& \frac{f_{j-1} -27 f_{j} +27 f_{j+1}  - f_{j+2}}{24\Delta v}, \nonumber
\end{eqnarray} 
we get the following fourth order finite volume approximation $\bar{Q}^{[4]}_j$ of $\bar{Q}_j$ 
\begin{equation}
\label{fourthorder_vol}
\bar{Q}^{[4]}_j=  \frac{(v_{j+1/2}\!-\!\breve{u})\breve{f}_{j+1/2}\!  -\! (v_{j-1/2}\!-\!\breve{u})\breve{f}_{j-1/2}   }{\Delta v} + \breve{T}\Big(\frac{-f_{j-2} + 28 f_{j-1} - 54 f_j +28f_{j+1}-f_{j+2}}{24\Delta v^2}\Big), 
\end{equation}
with $\breve{f}_{j+1/2}$ given by \eqref{midpoint} and the definitions of  $\breve{u}$ and $\breve{T}$ have to be adapted in this case: 
\begin{equation}
\label{def_UT_fourth}
n =\Delta v\sum_j f_j, \;\;\;\; \breve{u} =\Delta v \sum_j v_{j+1/2}\breve{f}_{j+1/2}, \;\; \mbox{ and } \;\;  n\breve{T} =\Delta v \sum_j (v_{j+1/2} - \breve{u})^2 \breve{f}_{j+1/2}. 
\end{equation}
To get a pointwise approximation (ie of $Q(f)(v_j)$), we will use the following relation between cell average and 
the pointwize value. For all smooth function $g$, we get   
\begin{equation}
\label{vol_to_point}
\bar{g}_j =  \frac{1}{\Delta v}\int_{v_{j-1/2}}^{v_{j+1/2}} g(v) dv = g(v_j) + \frac{\Delta v^2}{24}g''(v_j) + {\cal O}(\Delta v^4), 
\end{equation}
from which we deduce the following relation 
$$
g(v_j) = \bar{g}_j - \frac{\Delta v^2}{24}\frac{g(v_{j+1})-2g(v_j) + g(v_{j-1}) }{\Delta v^2} + {\cal O}(\Delta v^4).  
$$
We then deduce a fourth order approximation $Q^{[4]}_j $ of $Q(f)(v_j)$ 
gathering the previous ingredients 
\begin{equation}
\label{fourthorder}
Q^{[4]}_j = \bar{Q}^{[4]}_j - \frac{1}{24}\Big[Q_{j+1}-Q_{j} + Q_{j-1}  \Big] \;\; \mbox{ with } Q_j \mbox{ given by \eqref{form1}-\eqref{average_f}-\eqref{def_UT}}, 
\end{equation}
where $\bar{Q}^{[4]}_j$ is given by \eqref{fourthorder_vol}.  

In the following Proposition, we gather the properties of this new discretization. 
\begin{proposition}
	\label{fourth_order_prop}
	The discretization \eqref{fourthorder}-\eqref{midpoint}-\eqref{fourthorder_vol}-\eqref{def_UT_fourth} of the Fokker-Planck operator $Q$  given by \eqref{reformulation_Q} satisfies 
	$$
	Q(f)(v_j)- Q^{[4]}_j ={\cal O}(\Delta v^4), 
	$$
	and 
	$$
	\sum_j Q^{[4]}_j \left( 
	\begin{array}{lll}
	1\\ v_j\\ v_j^2/2
	\end{array}
	\right)  \Delta v = 
	\left( 
	\begin{array}{lll}
	0\\ 0\\ 0 
	\end{array}
	\right).  
	$$
\end{proposition}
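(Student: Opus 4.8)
The plan is to treat the two claims separately while exploiting throughout that $Q^{[4]}_j$ in \eqref{fourthorder} splits into a finite-volume part $\bar{Q}^{[4]}_j$, which is in flux-difference form, and a deconvolution correction built from the second-order operator $Q_j$ of \eqref{form1}. First I would record that $\bar{Q}^{[4]}_j = (\mathcal{H}_{j+1/2} - \mathcal{H}_{j-1/2})/\Delta v$ with interface flux $\mathcal{H}_{j+1/2} = (v_{j+1/2} - \breve{u})\breve{f}_{j+1/2} + \breve{T}\,D_{j+1/2}$, where $D_{j+1/2} = (f_{j-1} - 27 f_j + 27 f_{j+1} - f_{j+2})/(24\Delta v)$ is the fourth-order interface derivative from \eqref{midpoint}; checking that the diffusive stencil in \eqref{fourthorder_vol} is exactly $D_{j+1/2} - D_{j-1/2}$ is a short index computation. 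This flux structure drives both the conservation identities and, via telescoping, the accuracy estimate. Throughout I use the normalized moments of \eqref{def_UT_fourth} consistently with \eqref{def_UT}, and assume all boundary sums vanish by the rapid decay of $f$ in $v$.

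For the accuracy claim I would start from the deconvolution identity \eqref{vol_to_point}, written as $Q(f)(v_j) = \bar{Q}_j - \tfrac{1}{24}\big(Q(f)(v_{j+1}) - 2Q(f)(v_j) + Q(f)(v_{j-1})\big) + \mathcal{O}(\Delta v^4)$, where $\bar{Q}_j$ is the exact cell average given in the excerpt. The deconvolution correction in \eqref{fourthorder} is this same second difference applied to $Q_j$; since $Q_j = Q(f)(v_j) + \mathcal{O}(\Delta v^2)$ with a \emph{smooth} truncation error, its second difference reproduces the exact second difference up to $\mathcal{O}(\Delta v^4)$. It then remains to show $\bar{Q}^{[4]}_j = \bar{Q}_j + \mathcal{O}(\Delta v^4)$. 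Writing $\bar{Q}_j$ as the flux difference of the exact flux $H(v) = (v - u_f)f(v) + T_f\,\partial_v f(v)$, I would compare $\mathcal{H}_{j+1/2}$ with $H(v_{j+1/2})$: the reconstructions \eqref{midpoint} give $\breve{f}_{j+1/2} = f(v_{j+1/2}) + \mathcal{O}(\Delta v^4)$ and $D_{j+1/2} = \partial_v f(v_{j+1/2}) + \mathcal{O}(\Delta v^4)$, while the discrete moments satisfy $\breve{u} = u_f + \mathcal{O}(\Delta v^4)$ and $\breve{T} = T_f + \mathcal{O}(\Delta v^4)$, since the reconstruction is fourth order and the midpoint sums in \eqref{def_UT_fourth} converge (super-)algebraically on the rapidly decaying data. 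Hence the flux discrepancy $\mathcal{H}_{j+1/2} - H(v_{j+1/2})$ is a smooth $\mathcal{O}(\Delta v^4)$ function of the interface point, so its divided difference is again $\mathcal{O}(\Delta v^4)$ rather than $\mathcal{O}(\Delta v^3)$ — this smooth-error-telescoping point, together with the propagation of the moment errors through the nonlinear dependence on $\breve{u},\breve{T}$, is the main obstacle for accuracy.

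For the conservation claim I would apply discrete summation by parts to the flux form. Mass follows at once by telescoping. For momentum ($\phi_j = v_j$) the statement reduces to $\sum_j \mathcal{H}_{j+1/2} = 0$: the advective part vanishes because the weights in \eqref{midpoint} sum to one, giving $\Delta v\sum_j \breve{f}_{j+1/2} = n$ and hence $\sum_j (v_{j+1/2} - \breve{u})\breve{f}_{j+1/2} = n\breve{u}/\Delta v - \breve{u}\,n/\Delta v = 0$ by the definition of $\breve{u}$, while $\sum_j D_{j+1/2} = 0$ since the derivative weights sum to zero. For energy ($\phi_j = v_j^2/2$), using $v_{j+1}^2 - v_j^2 = 2\Delta v\,v_{j+1/2}$ the statement reduces to $\sum_j \mathcal{H}_{j+1/2}\,v_{j+1/2} = 0$; the advective part equals $n\breve{T}/\Delta v$ by the completing-the-square identity built into the definition of $\breve{T}$, and the diffusive part equals $-n\breve{T}/\Delta v$ once one establishes the exact discrete integration-by-parts identity $\sum_j D_{j+1/2}\,v_{j+1/2} = -n/\Delta v$. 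I expect this last identity — obtained by collecting, for each $k$, the coefficient $v_{k+3/2} - 27 v_{k+1/2} + 27 v_{k-1/2} - v_{k-3/2} = -24\Delta v$ of $f_k$ — to be the one calculation that must be carried out in full, and it is the crux of the energy case.

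Finally I would dispatch the deconvolution correction. Testing the second difference $Q_{j+1} - 2Q_j + Q_{j-1}$ against $\phi_j\in\{1,\,v_j,\,v_j^2/2\}$ and shifting indices moves the second difference onto the test function; since the second differences of $1$, $v_j$ and $v_j^2$ are $0$, $0$ and the constant $2\Delta v^2$, the mass and momentum contributions vanish identically and the energy contribution collapses to $\Delta v^2\sum_j Q_j$, which is zero by the mass conservation of Proposition \ref{prop_conservation}. Thus the correction is conservative on its own, so the three discrete invariants are controlled entirely by $\bar{Q}^{[4]}_j$, which completes the argument.
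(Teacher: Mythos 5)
Your proposal is correct and follows essentially the same route as the paper: the same splitting of $Q^{[4]}_j$ into the finite-volume part $\bar{Q}^{[4]}_j$ plus the deconvolution correction, the identity \eqref{vol_to_point} for fourth-order consistency (with the consistency of the correction resting, as in the paper, on $\breve{u}-u_f$, $\breve{T}-T_f$ and the smoothness of the truncation error of $Q_j$), flux-form summation by parts for the conservation of $\bar{Q}^{[4]}_j$, and Proposition \ref{prop_conservation} to dispose of the correction term. The only differences are cosmetic: you work with the interface derivative $D_{j+1/2}$ where the paper uses the cell-centered stencil $(Df)_j$ and "two discrete integrations by parts" (your identity $\sum_j D_{j+1/2}\,v_{j+1/2}=-n/\Delta v$ is exactly that computation made explicit), and you handle the correction via a single summation by parts where the paper shows $\sum_j Q_{j\pm1}\,v_j\,\Delta v=\sum_j Q_{j\pm1}\,v_j^2\,\Delta v=0$ term by term.
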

\begin{proof}
	First, we have from \eqref{vol_to_point} 
	\begin{eqnarray*}
		\bar{Q}_j &=& Q(f)(v_j) + \frac{\Delta v^2}{24}(\partial_v^2 Q(f))(v_j) + {\cal O}(\Delta v^4) \nonumber\\
		&=& Q(f)(v_j) + \frac{1}{24}(Q(f)(v_{j+1})-2Q(f)(v_j)+Q(f)(v_{j-1})) + {\cal O}(\Delta v^4),  
	\end{eqnarray*}
	and we also have $\bar{Q}_j = \bar{Q}^{[4]}_j + {\cal O}(\Delta v^4)$, from which we deduce 
	$$
	Q(f)(v_j) = \bar{Q}^{[4]}_j -\frac{1}{24}(Q(f)(v_{j+1})-2Q(f)(v_j)+Q(f)(v_{j-1})) + {\cal O}(\Delta v^4).   
	$$
	It remains to check that $Q(f)(v_{j+1})-2Q(f)(v_j)+Q(f)(v_{j-1}) - (Q_{j+1}-Q_{j} + Q_{j-1}) = {\cal O}(\Delta v^4)$, 
	where $Q_{j}$ is given by \eqref{form1}-\eqref{average_f}-\eqref{def_UT}.  
	First, we look at the consistency error $Q(v_j) -   Q_{j}$ by performing Taylor expansions of $f(v_{j\pm 1})$  in $Q_{j}$ 
	\begin{eqnarray*}
		Q_j 
		&=& Q(f)(v_j) +  (u_f-\tilde{u}) (\partial_v f)(v_j)+ (T_f-\tilde{T} ) (\partial^2_v f)(v_j)  \nonumber\\
		&& +\Delta v^2 \Big(\frac{1}{4}(\partial^2_v f)(v_j) + \frac{1}{24}(v_j -\tilde{u}) \partial_v^3 f(v_j)+ \frac{\tilde{T}}{24}(\partial_v^4 f)(v_j) \Big) + {\cal O}(\Delta v^4). 
	\end{eqnarray*}
	Thus, we have 
	\begin{eqnarray*}
		Q_{j+1}-2 Q_{j}+ Q_{j-1} &=& Q(f)(v_{j+1})-2Q(f)(v_j)+Q(f)(v_{j-1}) \nonumber\\
		&&+(u_f-\tilde{u}) \Delta v^2 (\partial_v^3 f)(v_j) +(T_f-\tilde{T} )\Delta v^2 (\partial^4_v f)(v_j) + {\cal O}(\Delta v^4) \nonumber\\
		&=& \Delta v^2 (\partial_v^2 Q(f))(v_j) +{\cal O}(\Delta v^4), 
	\end{eqnarray*}
	since $u_f-\tilde{u}=T_f-\tilde{T}={\cal O}(\Delta v^2)$ using second order quadrature. We deduce 
	\begin{eqnarray*}
		Q(f)(v_j) - Q^{[4]}_j &=& Q(f)(v_j) - \Big[ \bar{Q}^{[4]}_j -\frac{1}{24}(Q_{j+1}-2Q_{j} + Q_{j-1})\Big]   \nonumber\\
		&=& Q(f)(v_j) -  \bar{Q}^{[4]}_j + \frac{\Delta v^2}{24}\partial_v^2 Q(v_{j})+ {\cal O}(\Delta v^4) \nonumber\\
		&=& Q(f)(v_j) -  \Big[\bar{Q}_j - \frac{\Delta v^2}{24}\partial_v^2 Q(v_{j})\Big]+ {\cal O}(\Delta v^4) \nonumber\\
		&=& {\cal O}(\Delta v^4).  
	\end{eqnarray*}

	Next, we prove the conservations properties. We first prove that the conservations hold true for $\bar{Q}^{[4]}_j$. 
	The flux formulation implies directly mass conservation. Next, we introduce the notation $(D f)_j = (-f_{j-2} + 28 f_{j-1} - 54 f_j +28f_{j+1}-f_{j+2})/(24\Delta v^2)$ which is a fourth order 
	approximation of $(\partial_v^2 f)(v_j)$. The momentum conservation follows from the property $\sum_j  v_j (Df)_j \Delta v = 0$  
	which allows to follow the lines  of the proof of Proposition \ref{prop_conservation}.  
	For the energy, we first check  $\Delta v\sum_j (v^2_j/2) (Df)_j = n$ by two discrete integration by parts. 
	Thus, we follow the lines of the proof of Proposition \ref{prop_conservation} to get 
	$\sum_j \bar{Q}^{[4]}_j \Big(1, v_j, v^2_j\Big) \Delta v = (0, 0, 0)$. \\  
	Indeed, we have directly  that $\sum_j Q^{[2]}_j \Big(1, v_j, v^2_j\Big) \Delta v = (0, 0, 0)$. 
	It remains to check that $\sum_j [{Q}_{j+ 1} - 2Q_j+Q_{j-1}] \Big(1, v_j, v^2_j\Big) \Delta v = (0, 0, 0)$. The first component is 
	obvious using Proposition \ref{prop_conservation}. For the momentum, we have  
	\begin{eqnarray*}
		\sum_j {Q}_{j\pm 1} v_j \Delta v &=&   \sum_j {Q}_{j} v_{j\mp 1} \Delta v =  \sum_j {Q}_{j} (v_{j} \mp \Delta v) \Delta v \nonumber\\
		&=&  \sum_j {Q}_{j} v_{j} \Delta v  \mp  \sum_j {Q}_{j} \Delta v^2 =0, 
	\end{eqnarray*}
	again from Proposition \ref{prop_conservation}. 
	For the energy, 	we proceed similarly to get $\sum_j {Q}_{j\pm 1} v^2_j \Delta v = 0$ which enable to conclude the proof. 
\end{proof}

\begin{remark}
	For the entropy, it is possible to follow the methodology used for the second order approach to construct \eqref{def_M}.  
	Indeed, we can derive the fluxes from  \eqref{fourthorder} 
	\begin{eqnarray*}
		{\cal F}_{j+1/2} &=& (v_{j+1/2} - \breve{u}) \breve{f}_{j+1/2} \!+\! \frac{\breve{T}}{24\Delta v}(-f_{j+2} + 27 f_{j+1}- 27 f_{j} + f_{j-1}) \nonumber\\
		&-& \frac{1}{24}({\cal F}^{[2]}_{j+3/2}-2{\cal F}^{[2]}_{j+1/2}+{\cal F}^{[2]}_{j-1/2})   
	\end{eqnarray*}
	with ${\cal F}^{[2]}_{j+1/2}$ the flux of the second order version given by \eqref{flux_form1}, $n=\sum_j f_j \Delta v$ and 
	\begin{eqnarray*}
		n \breve{u} &=& \Delta v\sum_j v_{j+1/2}\breve{f}_{j+1/2}, \;\;\;  n\breve{T} = \Delta v\sum_j (v_{j+1/2} - \breve{u})^2 \breve{f}_{j+1/2},\nonumber\\  
		n\tilde{u} &=& \Delta v\sum_j v_{j+1/2}{f}_{j+1/2}, \;\;\;   n\tilde{T} = \Delta v\sum_j (v_{j+1/2} - \tilde{u})^2 {f}_{j+1/2}, 
	\end{eqnarray*}
	with $\breve{f}_{j+1/2}$ is given by \eqref{midpoint} and $f_{j+1/2} = (f_{j+1} + f_j)/2$. 
	On the other side, we derive the fourth order fluxes for the $L^2$ form (with $g_j=(f/M)_j$) 
	\begin{eqnarray*}
		{\cal G}_{j+1/2} &=& \frac{\breve{T} \breve{M}_{j+1/2}}{24 \Delta v}  (g_{j-1} - 27 g_{j} +  27 g_{j+1} - g_{j+2})-  \frac{1}{24} ({\cal G}^{[2]}_{j+3/2}-2{\cal G}^{[2]}_{j+1/2}+{\cal G}^{[2]}_{j-1/2})  
	\end{eqnarray*}
	with $\breve{M}_{j+1/2}$ is to be computed and ${\cal G}^{[2]}_{j+1/2}$ the flux of the second order version given by \eqref{flux_form2}. 
	Identifying the fluxes ${\cal F}_{j+1/2}$ and ${\cal G}_{j+1/2}$ enables to derive $\breve{M}_{j+1/2}$:  
	\begin{eqnarray*}
		\breve{M}_{j+1/2} &=& \frac{1}{\alpha_j}\Big[{\cal F}_{j+1/2} +\frac{1}{24} \Big({\cal G}^{[2]}_{j+3/2}-2{\cal G}^{[2]}_{j+1/2}+{\cal G}^{[2]}_{j-1/2} \Big)\Big], \nonumber\\
		\mbox{with } \alpha_j &=& \frac{\breve{T}}{24\Delta v} (g_{j-1} - 27 g_{j} +  27 g_{j+1} - g_{j+2}), \nonumber\\
		&=& \frac{\breve{T} (f_{j-1}M_{j}M_{j+1}M_{j+2} -27 f_j M_{j-1}M_{j+1}M_{j+2}  + 27 f_{j+1}M_{j-1}M_{j}M_{j+2}  - f_{j+2}M_{j-1}M_{j}M_{j+1})  }{24 M_{j-1}M_{j}M_{j+1}M_{j+2} \Delta v}
	\end{eqnarray*}
	ensuring that the two forms are equivalent from a discrete point of view. Some calculations similar to the ones performed in the end of \ref{proof_prop} enable to prove that $\breve{M}_{j+1/2} =M(v_{j+1/2}) + {\cal O}(\Delta v^4)$.

\end{remark}

\section{Explicit stabilized Runge-Kutta methods}  
The space dicretization of a diffusion dominated advection-diffusion PDE leads to a high dimensional system of ODEs of the form 
\begin{equation}\label{eq:ode}
\dot y = F(y), \quad y(0)=y_0,
\end{equation}
where the eigenvalues of the Jacobian of the vector field $F$ are located in a narrow strip around the negative real axis. In order to study the stability of Runge-Kutta methods, it is common to consider the linear equation (see \cite[Chap. IV.2]{HaW96})
\begin{equation*}\label{eq:linearode}
\dot y = \lambda y, \quad y(0)=y_0,
\end{equation*}
where $\lambda \in \mathbb{C}$. Applying a RK method with time step $\Delta t$ to the above linear ODE leads to $y^n=R(\lambda \Delta t)^ny_0$ where $R(z)$ is called the stability function, and then the stability domain of the method is defined by
\begin{equation*}
\mathcal{S}=\left\{z\in\mathbb{C}\,;\,|R(z)|\leq 1\right\}.
\end{equation*}

For stiff equations, the fact that the stability function of explicit methods is always a polynomial, which means bounded stability region, causes restrictions on the step size which turn out to be severe in the case of standard explicit schemes such as the forward Euler method. Implicit schemes are a good alternative for low dimensional problems, but their cost becomes prohibitive when solving high dimensional problems since they require the solution of high dimensional linear or/and nonlinear systems.

Explicit stabilized RKC methods (for Runge--Kutta--Chebyshev) have the advantages of both worlds. They do not require the solution of linear/nonlinear systems and, at the same time, they allow large step sizes thanks to their extended stability domains over the negative real axis. The literature of these methods is rich, and many schemes of this types were introduced to solve different types of stiff problems such as advection--diffusion--reaction equations {\cite{A22,AV13,zbinden}}, stochastic differential equations {\cite{AAV18,AbL08,AVZ13}}, and optimal control problems \cite{AV21}. 
In particular, for the advection--diffusion problems, recent papers improved the stability domain of stabilized Runge-Kutta method along the imaginary axis to take into account relatively large Peclet number \cite{AV13, A22}. Of course, when advection strongly dominates the diffusion part (very large Peclet number), such methods are not appropriate and standard high order Runge-Kutta methods may be preferred. 
For a complete review on explicit stabilized RKC methods for stiff ODEs, we refer the reader to the review \cite{Abdrev}.

{
	For numerical simulations we will consider a bounded velocity domain of the form $[-v_{\max},v_{\max}]$ where $v_{\max}\in \R$ large enough, which is then discretized using $N_v+1$ points $v_j=-v_{\max}+j\Delta v,\,j=0,\dots,N_v$, where $\Delta v=2v_{\max}/N_v$, and $N_v$ is a positive integer. In Figure \ref{fig:ev}, we plot the eigenvalues of the discrete $Q$ for different values of $v_{\max}$, the larger $v_{\max}$ is, the greater the advection term and thus the larger the imaginary parts of the eigenvalues.
}

\begin{figure}[tb]
	\centering
	\begin{subfigure}[t]{0.32\textwidth}
		\includegraphics[width=\linewidth]{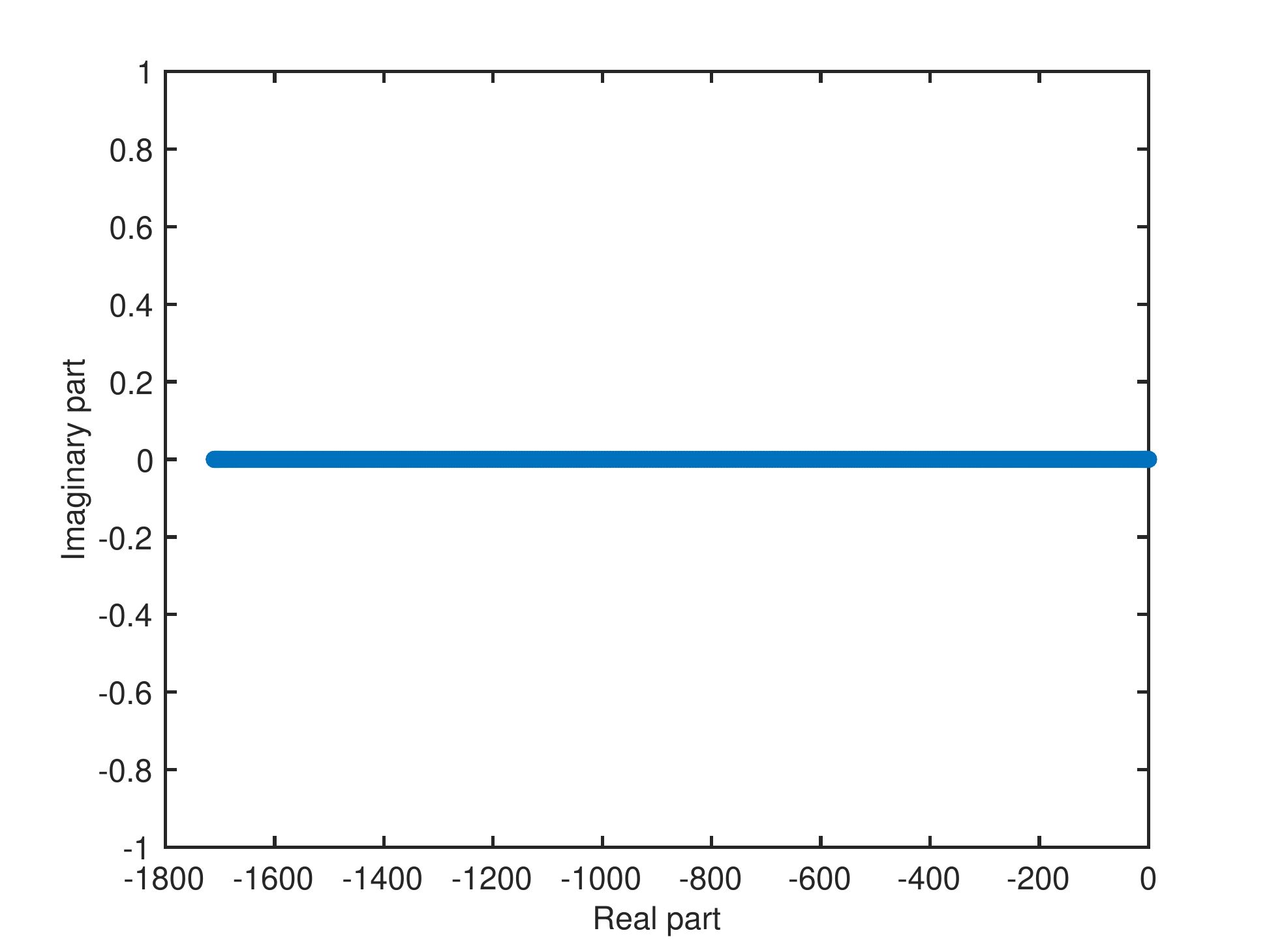}
		\caption{$v_{\max}=12$.}
	\end{subfigure}
	\begin{subfigure}[t]{0.32\linewidth}
		\includegraphics[width=\linewidth]{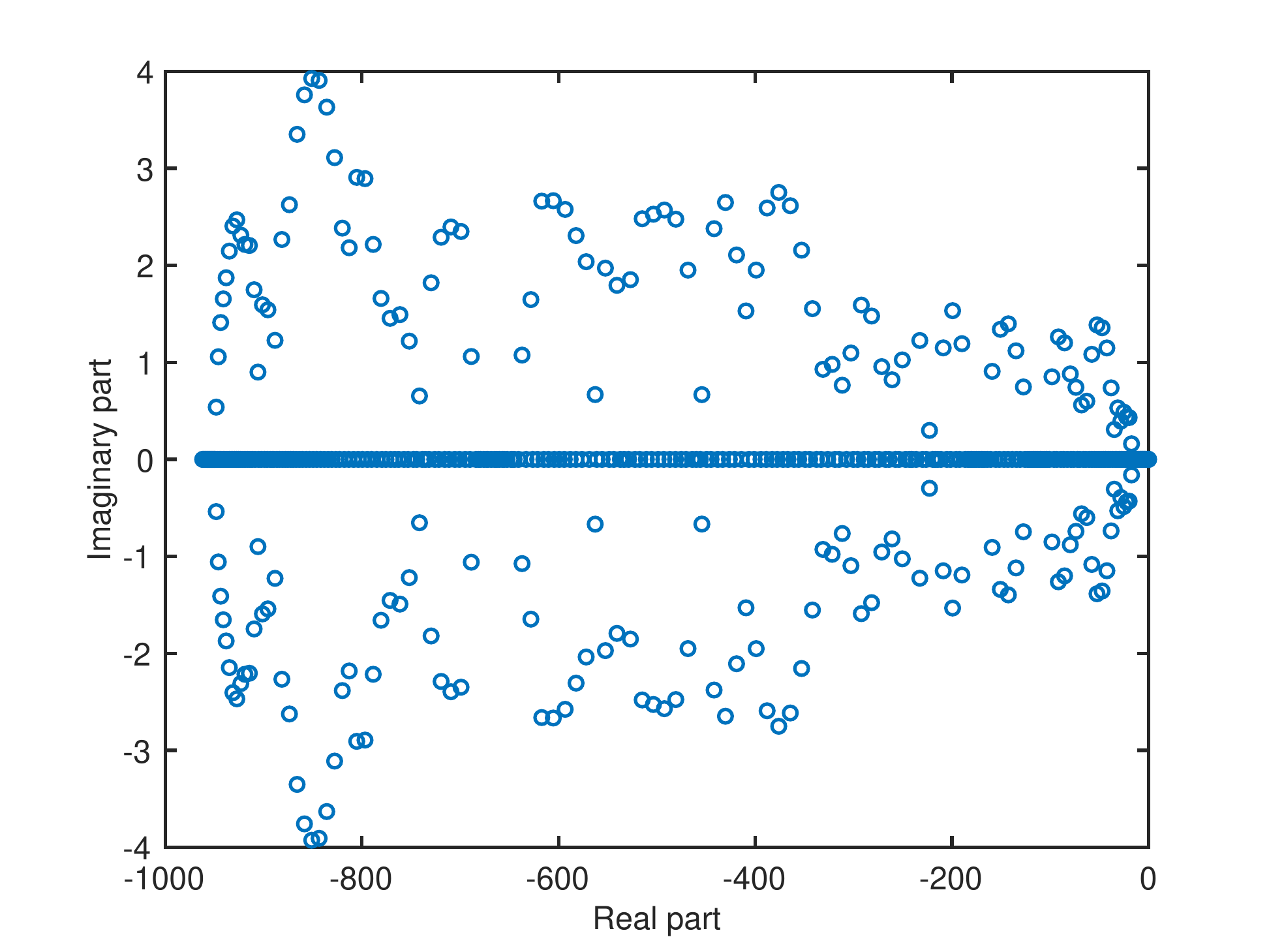}
		\caption{$v_{\max}=16$.}
	\end{subfigure}
	\begin{subfigure}[t]{0.32\linewidth}
		\includegraphics[width=\linewidth]{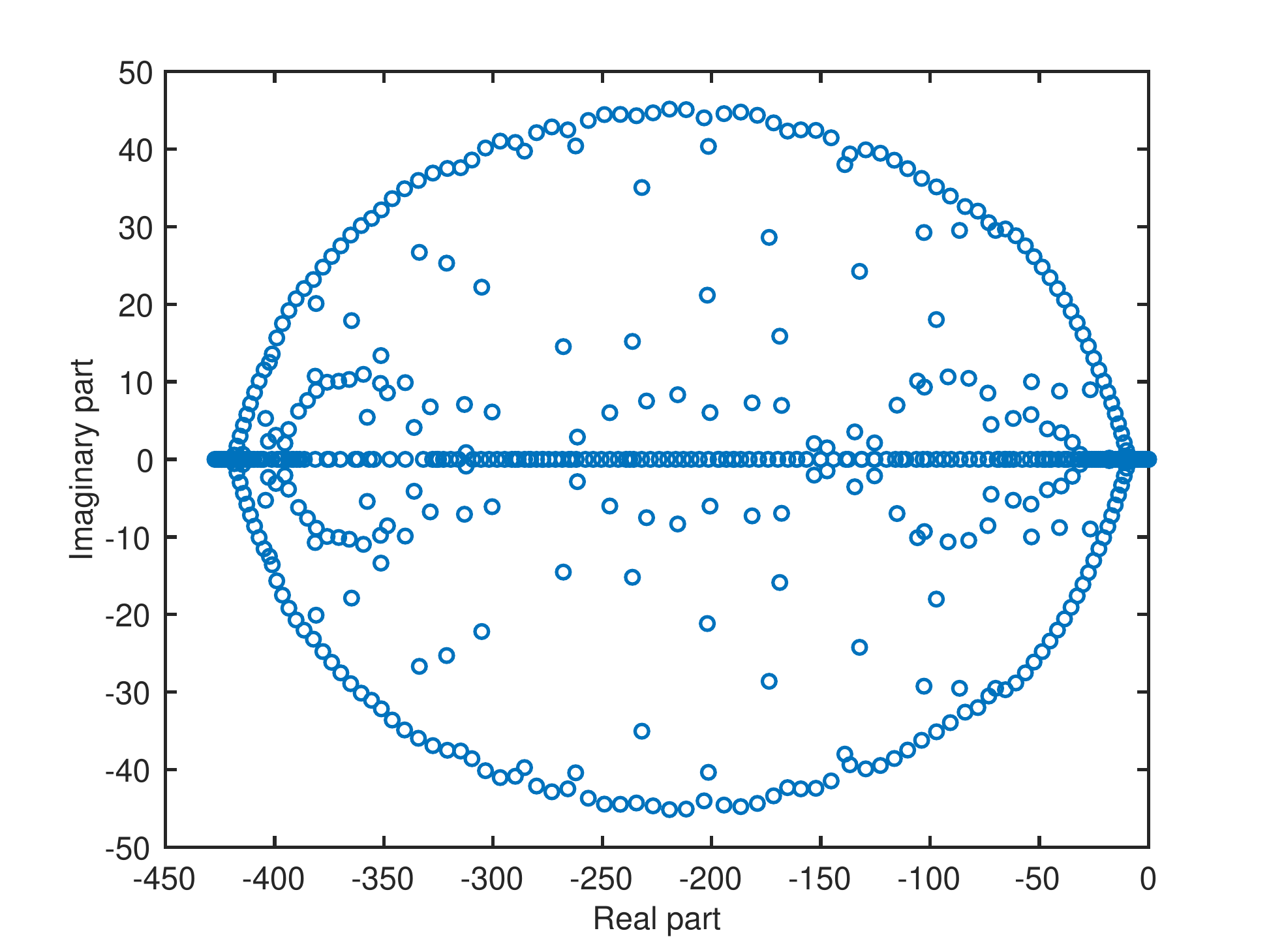}
		\caption{$v_{\max}=24$.}
	\end{subfigure}
	\caption{
		Eigenvalues of $Q$ for $N_v=512$, $T=1.88$, $u=0$, $\nu=0.5$, and different values of $v_{\max}$.
		\label{fig:ev}}
\end{figure}

\subsection{Solving the velocity part using RKC methods}
Here, we detail how to solve numerically $\partial_t f = \nu Q(f)$ or $\partial_t f + E\partial_v f = \nu Q(f)$ depending on the chosen splitting strategy (see Section \ref{sec:coupling_vlasov}). For the second equation, we combine the transport part $E\partial_v f$ 
with collisional part as 
$$
\partial_t f =\nu \tilde{Q}(f) := \nu\partial_v((v-u-\frac{E}{\nu})f + T\partial_v f). 
$$
We will present the algorithm for $\partial_t f=Q(f)$, which applies in the same way on $\partial_t f={\tilde Q(f)}$. {We will omit the coefficient $\nu$ for simplicity, and to avoid confusion with the coefficients $\nu_\ell$ of the RKC method}. After a velocity discretization of $Q(f)$ following the methods presented in Section \ref{qf}, 
it remains to integrate in time. Motivated by the fact that the eigenvalues of the Jacobian of $Q$ are localized in the negative half space (see Figure \ref{fig:ev}), we propose here to use a stabilized Runge--Kutta method which enables to satisfy 
several properties. Indeed, since it is an explicit method, it avoids a potentially costly implicit solver, 
moreover, the discrete properties  of $Q$ can be transferred easily to the fully discrete case and of course 
it is not constrained by the severe parabolic CFL condition and  enables to use large time steps. 

Then, once discretized in velocity, we obtain 
\begin{equation}\label{eq:disode}
\frac{d f_j(t)}{dt}  = Q_j, \;\;\;\;\;\; \mbox{ {with} } {Q_j  \approx  Q(f)(v_j), }
\end{equation}
which is amenable for stabilized RKC methods. {Before presenting the RKC schemes, let us quickly recall the definition of first kind Chebyshev polynomials which are the key ingredient in constructing this type of stabilized methods. The first kind Chebyshev polynomials are the unique polynomials that satisfy, for all complex number $z$, the following recurrence
	\begin{equation}
	T_0(z)=1,\quad T_1(z)=z, \quad T_j(z)=2zT_{j-1}(z)-T_{j-2}(z),\,\,\,\, j\geq2.
	\end{equation}
}

The first order RKC method (RKC1) applied to \eqref{eq:disode} reads
\begin{eqnarray}
K^{(0)} &=& f^n_j \nonumber\\ 
K^{(1)} &=& K^{(0)}  + \mu_1  \Delta t Q^n_j, \nonumber\\
\mbox{for} \!\!\!\!&&\!\!\!\! \ell=2, \dots, s\nonumber\\
K^{(\ell)} &= &  \kappa_\ell K^{(\ell-2)} + \nu_\ell K^{(\ell-1)} + \mu_\ell \Delta t Q^{(\ell-1)}_j, \;\;\;  \nonumber\\
\label{rkc_algo}
{f}^{n+1}_j &=&  K^{(s)},
\end{eqnarray}
where,
\begin{equation}\label{eq:coeffscheb}
\mu_1=\frac{\omega_1}{\omega_0},\quad
\mu_\ell =\frac{2\omega_1T_{\ell-1}(\omega_0)}{T_\ell(\omega_0)},\quad 
\nu_\ell=\frac{2\omega_0T_{\ell-1}(\omega_0)}{T_\ell(\omega_0)},\quad \kappa_\ell=1-\nu_\ell,\quad \ell=2,\ldots,s.
\end{equation}
for $\omega_{0}=1+\frac{\eta}{s^{2}},$ $\omega_{1}=\frac{T_{s}(\omega_{0})}{T'_{s}(\omega_{0})}$, and $\eta=0.05$.
The second order RKC method (RKC2) reads
\begin{eqnarray*}
	K^{(0)} &=& f^n_j \\ 
	K^{(1)} &=& K^{(0)}  + \hat\mu_1  \Delta t Q^n_j, \\
	\mbox{for} \!\!\!\!&&\!\!\!\! \ell=2, \dots, s\\
	K^{(\ell)} &= & (1-\hat\kappa_\ell-\hat\nu_\ell) K^{(0)} + \hat\kappa_\ell K^{(\ell-2)} + \hat\nu_\ell K^{(\ell-1)} + 
	\hat\mu_\ell \Delta t Q^{(\ell-1)}_j - a_{\ell-1} \hat\mu_\ell \Delta t Q^n_j, \;\;\;  \\
	{f}^{n+1}_j &=&  K^{(s)},
\end{eqnarray*}
where
\begin{equation}\label{eq:coeffsrkc}
\hat\mu_\ell=\frac{2b_\ell\omega_2}{b_{\ell-1}},~~ 
\hat\nu_\ell=\frac{2b_\ell\omega_0}{b_{\ell-1}},~~ 
\hat\kappa_\ell=-\frac{b_\ell}{b_{\ell-2}},~~
b_\ell=\frac{T''_\ell(\omega_0)}{(T'_\ell(\omega_0)^2)},~~
a_\ell=1-b_\ell T_\ell(\omega_0),
\end{equation}
for $\omega_0=1+\frac{\eta}{s^2},\,
\omega_2=\frac{T'_s(\omega_0)}{T''_s(\omega_0)},$ and $\eta=0.15.$
In the above equations, $Q^{(\ell)}_j=Q(f^{(\ell)}_j)$, $s\in\mathbb{N}\backslash\{0,1\}$ is the number of stages of the method. The stability region of the RKC schemes contains a strip around the interval $[-C_\eta s^2,0]$ where, typically, $C_\eta=1.93$ for RKC1 and $C_\eta=0.65$ for RKC2. The dependence on $\eta$ will be clarified in the next subsection. For a given step size $\Delta t$, the number of stages $s$ is chosen such that the product $\lambda_{\max}\Delta t$ lies in the stability region, where $\lambda_{\max}$ is the eigenvalue with the maximum real part in absolute value. 

\begin{proposition}\label{prop:rkc_cons}
	Consider the homogeneous equation $\partial_t f=\nu Q(f)$ with $Q$ given by \eqref{reformulation_Q}. Suppose that we discretize $Q(f)$ in velocity using \eqref{form1}-\eqref{average_f}-\eqref{def_UT} (second order version) or using  \eqref{fourthorder}-\eqref{midpoint}-\eqref{fourthorder_vol}-\eqref{def_UT_fourth} (fourth order version), and {RKC1 or RKC2} in time. Let us denote by $f^n_j$ the approximation of $f(t^n,v_j)$, then the resulting numerical scheme preserves mass, momentum, and kinetic energy 
	$$
	\text{For all $n\in \mathbb{N}$},\quad \sum_{j}   \left( 
	\begin{array}{lll}
	1\\ v_j\\ v_j^2/2
	\end{array}
	\right)  f^{n}_j=\sum_{j} \left( 
	\begin{array}{lll}
	1\\ v_j\\ v_j^2/2
	\end{array}
	\right)  f^0_j. 
	$$
\end{proposition}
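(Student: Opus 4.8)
The plan is to leverage the two structural facts that are already in hand. On one side, Propositions~\ref{prop_conservation} and~\ref{fourth_order_prop} establish that both the second- and fourth-order semi-discrete operators are \emph{moment-orthogonal}: for every test vector $\phi_j\in\{1,\,v_j,\,v_j^2/2\}$ one has $\sum_j \phi_j\,Q_j\,\Delta v=0$, and crucially this is a pointwise identity in the state, so it holds for $Q$ evaluated at \emph{any} argument. On the other side, the RKC1 and RKC2 recurrences are affine combinations of their internal stages, the coefficients multiplying the stage values $K^{(\cdot)}$ summing to one. My plan is to combine these by showing, by induction on the stage index $\ell$, that every internal stage carries exactly the mass, momentum, and kinetic energy of $f^n$; since $f^{n+1}=K^{(s)}$, the one-step invariance follows, and iterating over $n$ gives the statement for all $n$.

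To set things up I would fix $\phi\in\{1,v,v^2/2\}$ and write $\mathcal{M}_\phi(g)=\sum_j \phi_j\,g_j\,\Delta v$, a linear functional of the discrete state $g$. The conservation results read $\mathcal{M}_\phi(Q(g))=0$ for any $g$, so the target reduces to proving $\mathcal{M}_\phi(K^{(\ell)})=\mathcal{M}_\phi(f^n)$ for all $\ell=0,\dots,s$. Linearity lets me apply $\mathcal{M}_\phi$ term by term to each stage definition, which is the whole point: every increment proportional to some $\Delta t\,Q^{(\cdot)}$ is annihilated.

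For RKC1 the base cases are immediate: $K^{(0)}=f^n$ gives equality trivially, and $K^{(1)}=K^{(0)}+\mu_1\Delta t\,Q^n$ gives $\mathcal{M}_\phi(K^{(1)})=\mathcal{M}_\phi(f^n)+\mu_1\Delta t\,\mathcal{M}_\phi(Q^n)=\mathcal{M}_\phi(f^n)$. For the inductive step I apply $\mathcal{M}_\phi$ to $K^{(\ell)}=\kappa_\ell K^{(\ell-2)}+\nu_\ell K^{(\ell-1)}+\mu_\ell\Delta t\,Q^{(\ell-1)}$; the last term vanishes, and using the identity $\kappa_\ell+\nu_\ell=1$ from \eqref{eq:coeffscheb} together with the induction hypothesis yields $\mathcal{M}_\phi(K^{(\ell)})=(\kappa_\ell+\nu_\ell)\,\mathcal{M}_\phi(f^n)=\mathcal{M}_\phi(f^n)$. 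The RKC2 argument is structurally identical: the stage now has three state-terms with coefficients $(1-\hat\kappa_\ell-\hat\nu_\ell)$, $\hat\kappa_\ell$, $\hat\nu_\ell$ that manifestly sum to one, plus two increments $\hat\mu_\ell\Delta t\,Q^{(\ell-1)}$ and $-a_{\ell-1}\hat\mu_\ell\Delta t\,Q^n$, both of which $\mathcal{M}_\phi$ sends to zero.

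I do not expect a genuine obstacle here; the only point requiring care is the purely bookkeeping verification that the state-coefficients sum to one at \emph{every} stage, including the first, and that the extra correction term in RKC2 is indeed of the form $\Delta t\,Q^n$ so that it too is moment-orthogonal. No property of the Chebyshev coefficients beyond this affine consistency is used. I would close by remarking that the argument is entirely independent of $\Delta t$, of the number of stages $s$, and of whether the second- or fourth-order velocity discretization is employed, since it rests solely on the moment-orthogonality of $Q$ and the affine structure of the stabilized recurrences.
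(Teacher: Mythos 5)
Your proof is correct and follows exactly the route the paper takes: the paper's own proof is the one-line remark that the result ``follows easily by induction on the internal stages of the RKC methods,'' and your argument is precisely that induction, spelled out via the moment-orthogonality $\sum_j \phi_j Q_j \Delta v = 0$ of Propositions~\ref{prop_conservation} and~\ref{fourth_order_prop} applied at every stage, together with the affine consistency $\kappa_\ell+\nu_\ell=1$ (resp.\ the RKC2 coefficients summing to one) of the stage recurrences. Your observations that the correction term $-a_{\ell-1}\hat\mu_\ell\Delta t\,Q^n$ in RKC2 is also annihilated and that the argument is independent of $\Delta t$, $s$, and the velocity discretization order are exactly the details the paper leaves implicit.
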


\begin{proof}
	The proof follows easily by induction on the internal stages of the RKC methods.
\end{proof}

\subsection{Damping}
The coefficients of the above methods depend on a parameter usually denoted $\eta$ and called \emph{damping parameter}. In the absence of damping, $(\eta=0)$, the stability region has singular points that correspond to the value $\pm1$ of the stability function. In order to make the schemes robust with respect to possible small perturbations of the eigenvalues, coming from advection terms for example, a positive value of $\eta$ is needed. For pure diffusion or diffusion dominated advection--diffusion operators, $\eta$ is usually fixed to $0.05$ for RKC1 and $0.15$ for RKC2 (hence the values $1.93$ and $0.65$ of $C_\eta$ respectively). However, increasing $\eta$ gives the stability region more width in the imaginary direction which gives the scheme the ability to handle larger advection terms with reasonable time step sizes. Figure \ref{fig:stabdom} illustrates this idea.
Note that this gain in the imaginary direction comes at the cost of reducing the constant $C_\eta$ and then a reduction in the length of the real negative interval included in the stability domain. A good estimation of this constant is $C_\eta=(1+\omega_0)/\omega_1$. 


\begin{figure}[tb]
	\centering
	\begin{subfigure}[t]{1\textwidth}
		\centering
		\includegraphics[width=0.9\linewidth]{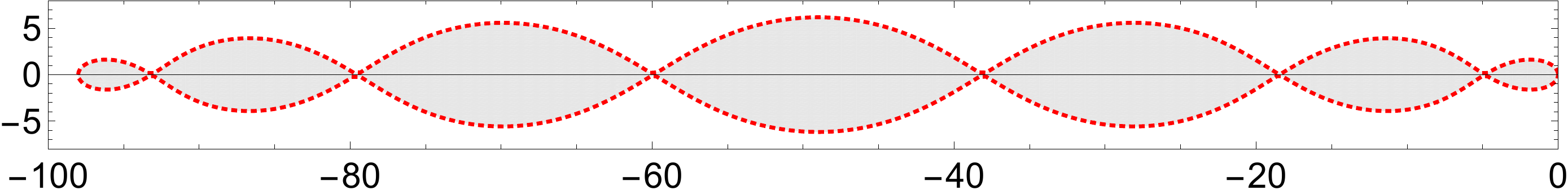}\\[2.ex]
		\includegraphics[width=0.9\linewidth]{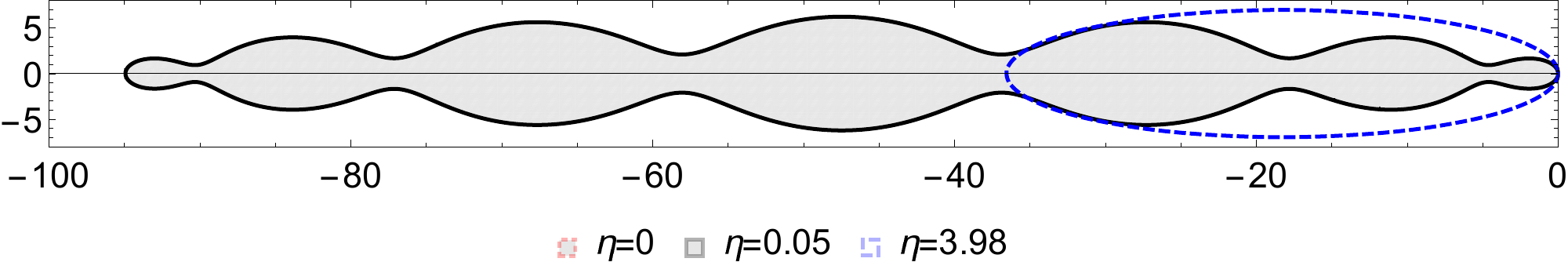}
		\caption{Complex stability domain for different damping parameters $\eta$.}
	\end{subfigure}
	\medskip
	\begin{subfigure}[t]{1\textwidth}
		\centering
		\includegraphics[width=0.9\linewidth]{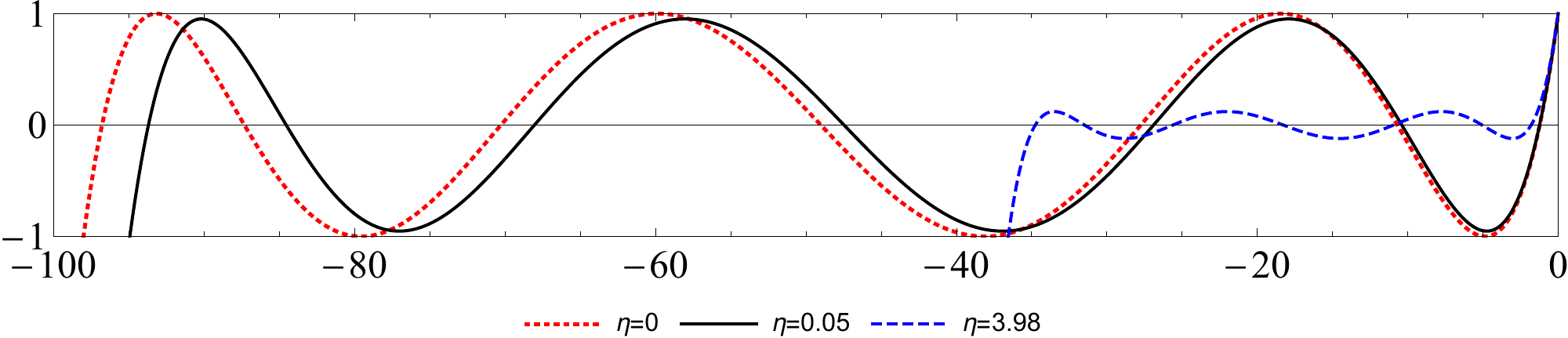}
		\caption{Corresponding stability functions.}
	\end{subfigure}
	\caption{
		Stability domains and stability functions of the Chebyshev method (RKC1) for $s=7$ and different damping values $\eta=0,0.05,3.98$.
		\label{fig:stabdom}}
\end{figure}

\subsection{Adaptive time stepping  strategy}
\label{adaptive}
We will use the automatic time step selection procedure adapted to the second order RKC method from \cite{SSV98}. The estimator used to estimate the local error of the method is the following
$$
Est_{n+1}=\frac1{15}[12(f^n-f^{n+1})
+6\Delta t(Q^n+Q^{n+1})].
$$
In order to reach a given accuracy, the adaptive time step selection is done in the spirit of the procedure explained in \cite{SSV98}. For reference, see also \cite[P. 167]{HNW93}. After choosing the suitable time step, the number of stages is chosen using the formula
\begin{equation*}
s := \left[\sqrt{\frac{\Delta t\lambda_{\max}+1.5}{C_\eta}}+0.5\right],
\end{equation*}
where the brackets mean rounding to the nearest integer. Note that the automatic time step control takes care as well of possible need to reduce the step size due to large advection during the integration process. This large effect of the advection term might appear when the maximum value of the velocity $v_{\max}$ is quite large.
\label{sec:est}

\section{Coupling with the Vlasov equation}
\label{sec:coupling_vlasov}
To solve the full Vlasov-Poisson-Fokker-Planck system, we use (Strang) splitting techniques between the different operators following two different strategies. 
The first strategy is classical {and relatively easy to implement}, while the second smartly manipulated some terms to exactly conserve the total energy.

\subsection{Classical splitting}
Even if a three steps splitting (between transport in space, velocity and collision) can be easily obtained, 
the velocity  transport part can be merged with the collision part so that the resulting splitting called SL-RKC is 
\begin{itemize}
	\item Solve the transport in space $\partial_t f + v\partial_x f = 0$ using a semi-Lagrangian method or Fourier technique.
	\item Solve $\partial_xE=1-\int_{\mathbb{R}}f dv$ using Fourier transform. 
	\item Solve $\partial_t f + E\partial_v f = \nu Q(f)$ using a stabilized Runge--Kutta--Chebyshev method (RKC).
\end{itemize}
The first two parts are classical and the last part can be rewritten as 
$$
\partial_t f = \nu \partial_v ((v-u_f - \frac{E}{\nu})f + T_f \partial_v f) = \nu\tilde{Q}(f), 
$$
which has the same structure as $Q(f)$ so that the numerical schemes presented in Section \ref{qf} can be applied. 

\subsection{Non-classical splitting}\label{sec:srr}
The second strategy is motivated by total energy (kinetic plus electric) conservation by using ideas from \cite{cf,HF01} and applied to 
Vlasov-Amp\`ere instead of Vlasov-Poisson. The algorithm called SL-RK2-RKC is 
\begin{itemize}
	\item Solve the transport in space $\partial_t f + v\partial_x f = 0$ using a semi-Lagrangian method or Fourier technique.
	\item Instead of solving a Poisson equation for the electric field $E$, solve the corresponding Amp\`ere equation $\partial_tE=-\int_{\R^{d_v}}vfdv$.
	\item Solve $\partial_t f + E\partial_v f = 0$ using a suitable second order  Runge--Kutta method and suitable approximation of $E$ that guarantees the exact conservation of total energy. Details will be given later in the current section.
	\item Solve $\partial_t f = \nu Q(f)$ using RKC.
\end{itemize}
{	The time step test is done after solving the collision part using RKC, and the integration is repeated from the beginning after a step is rejected.}

For this non-classical splitting, we will benefit from a smart idea introduced in the papers \cite{cf,HF01} of adapting the electric field approximation in order to construct a numerical scheme that preserves exactly the total energy of the system. 
Here, we will follow \cite{HF01} and use Amp\`ere equation 
for the electric field instead of Poisson equation which will be used only once at the beginning {of the simulation} to get an approximation of the initial value of $E$. In what follows, we will set $d_x=d_v=1$.

Let $D_xf$ be a (at least) consistent finite difference approximation of $\partial_xf$, and $D_vf$ be a (at least) second order finite differences approximation of $\partial_vf$ (centered or upwind). 
Let $f_{i,j}^n$ denote an approximation of $f(t^n, x_i, v_j)$ with $t^n=n\Delta t, x_i=i\Delta x, v_j=j\Delta v$ with $\Delta t, \Delta x, \Delta v$ 
the time, space and velocity step meshes. We consider the following discretization of the VFP equation
\begin{align}
f^{(1)}_{ij} &= f^n_{ij},\nonumber\\
f^{(2)}_{ij} &= f^n_{ij} + \frac{\Delta t}2 F^{(1)}_{ij},\nonumber\\
f^{n+1}_{ij}&=f^n_{ij} + \Delta t F^{(2)}_{ij}, \label{eq:fnp1}
\end{align}
where,
$$
F^{(k)}_{ij}=-v_jD_xf^{(k)}_{ij} - E^{(k)}_iD_vf^{(k)}_{ij} + \nu Q^{(k)}_{ij}, \text{ with } E^{(1)}_i=E^n_i.
$$
The time integrator used here is a 2-stage second order explicit RK method with $a_{21}=1$ and weights $b_1=0,\,b_2=1$ (explicit midpoint rule). The aim is to find an adapted approximation of $E^{(2)}_i$ that guarantees exact conservation of the total energy
$$
\mathcal{E}_{tot}(t) = \mathcal{E}_{kin}(t) + \mathcal{E}_{elec}(t),
$$
where 
$$
\mathcal{E}_{kin}(t)=\frac12\int_{\R\times[0,L]}|v|^2fdxdv \quad\text{ and }\quad \mathcal{E}_{elec}(t) = \frac12\int_0^{L}E^2(t,x)dx.
$$
In other words, we need to satisfy the equality $\frac{d\mathcal{E}_{tot}(t)}{dt}=0$ at the numerical level. If we multiply \eqref{eq:fnp1} by $\frac{v_j^2}2\Delta x\Delta v$ and we sum over indices $i\in{1,\dots,N_x},\,j\in\Z$, we get
\begin{align*}
\frac{\kin^{n+1}-\kin^n}{\Delta t} = \left(-\frac12\sum_{i,j}v_j^3D_xf^{(2)}_{ij}-\frac12\sum_{i,j}E^{(2)}_iv_j^2D_vf^{(2)}_{ij}+\frac{\nu}2\sum_{i,j}v_j^2Q^{(k)}_{ij}\right)\Delta x\Delta v,
\end{align*}
the first sum vanishes thanks to periodic boundary conditions in space, and the last sum also vanishes by Proposition \ref{prop_conservation}. Now, using discrete integration by parts we get
\begin{align*}
\frac{\kin^{n+1}-\kin^n}{\Delta t} &= \Delta x\Delta v\sum_{i,j}E^{(2)}_iv_jf^{(2)}_{ij}\\
&=-\Delta x\sum_{i}E^{(2)}_i\frac{E^{n+1}_i-E^n_i}{\Delta t},
\end{align*}
where $E^{n+1}_i=E^n_i-\Delta t\Delta v\sum_{j}v_jf^{(2)}_{ij}$ is an approximation of the electric field at time $t^{n+1}$ using the midpoint rule quadrature applied to Amp\`ere equation. Thus, we consider the following approximation of the electric field
\begin{equation}\label{eq:exact}
E^{(2)}_i=\frac{E^{n+1}_i+E^n_i}{2},
\end{equation}
which implies the conservation of the total energy
\begin{equation*}
\frac{\kin^{n+1}-\kin^n}{\Delta t} +{\frac{ \Delta x}{2}}\sum_{i}\frac{(E_i^{n+1})^2-(E_i^n)^2}{\Delta t} = 0.
\end{equation*}

{The scheme \eqref{eq:fnp1}  uses RK2 for the whole equation. The scheme that we would like to consider is the one described in Section \ref{sec:srr} (SL -- RK2 -- RKC2). }

\begin{remark}
	A high order upwind scheme is used to approximate the velocity derivative instead of a centered scheme as in 
	\cite{cf} which improves the stability of the resulting scheme, but the strategy can be extended to the Vlasov-Maxwell case. 
\end{remark}

\section{Numerical results}
This section is dedicated to the presentation of some numerical tests in different configurations: homogeneous in space, one-dimension   in space and velocity and one-dimension in space and two dimensions in velocity. Note that if we write RKC without precising the order,  then we mean the second order method.

\subsection{Homogeneous case }
First, we consider the simple homogeneous problem in one dimension satisfied by $f(t, v)$ 
\begin{equation}
\partial_t f = \nu \partial_v ((v-u) f + T\partial_v f),
\label{eq:hom} 
\end{equation}
with the initial condition 
\begin{equation} 
\label{init_hom}
f(t=0, v) =  M_{\alpha, 0, T_c}(v)+ M_{\frac{1-\alpha}{2}, 4, 1}(v) + M_{\frac{1-\alpha}{2}, -4, 1}(v), \;\; \mbox{ with }  \;\; M_{\rho, u, T} = \frac{\rho}{\sqrt{2\pi T}}e^{-\frac{(v-u)^2}{2T}}. 
\end{equation}  
with $T_c=0.2$ and for the current section, we choose the following initial data with $\alpha=0.9$. 
We consider a velocity domain $[-v_{\max}, v_{\max}]$ with $v_j=-v_{\max}+j\Delta v$, $\Delta v=2v_{\max}/N_v$, and $N_v\in\mathbb{N}$. 
The collision operator is discretized by the scheme \eqref{form1} with Dirichlet boundary condition at $v_{\max}$.  

\subsubsection{CFL condition and second order in time}
Let us discuss in practice the advantages of RKC methods applied to $\partial_t f = \nu Q(f)$, where $\nu=0.1$, $v_{\max}=12$, $N_v=256$, $s=20$ stages. The preserved quantities $T=1.8822$, $u=0$ are calculated using the initial condition. For RKC1, the time step must be chosen such that
$$
\Delta t\leq\frac{1.93s^2}{4\nu T}\Delta v^2\simeq 9.09,
$$
and for RKC2, we get
$$
\Delta t\leq\frac{0.65s^2}{4\nu T}\Delta v^2\simeq 3.06.
$$
The CFL condition for standard explicit methods such as explicit Euler or RK2 reads
$$
\Delta t\leq\frac{1}{2\nu T}\Delta v^2\simeq 0.023.
$$
This means that for a final time $t_{\max}=100$, the cost of RKC1 is: $(t_{\max}/\Delta t)\times s = (100/9.09)\times20\simeq221$ evaluations of $Q$, and for RKC2: $(100/3.06)\times20\simeq654$ evaluations of $Q$. On the other hand, the cost of explicit Euler method: $100/0.023\simeq4348$ evaluations of $Q$ and that of RK2:$(100/0.023)\times 2\simeq8696$ evaluations of $Q$. 
Note that here we are not taking into account the possible restriction coming from the advection in $v$, since for the above values the eigenvalues  are all on the negative real axis (as in Figure \ref{fig:ev}a). To illustrate this for the RKC2 method, we plot in Figure \ref{fig:cfl} the solution for three different time steps at final time $t_{\max}=300$. 
We observe that the solution reaches the Maxwellian equilibrium for $\Delta t=0.5, 3.05$ but for $\Delta t=3.085$, the method becomes unstable, as predicted by the 
previous analysis. 

\begin{figure}[tb]
	\centering
	\begin{subfigure}[t]{0.32\textwidth}
		\includegraphics[width=\linewidth]{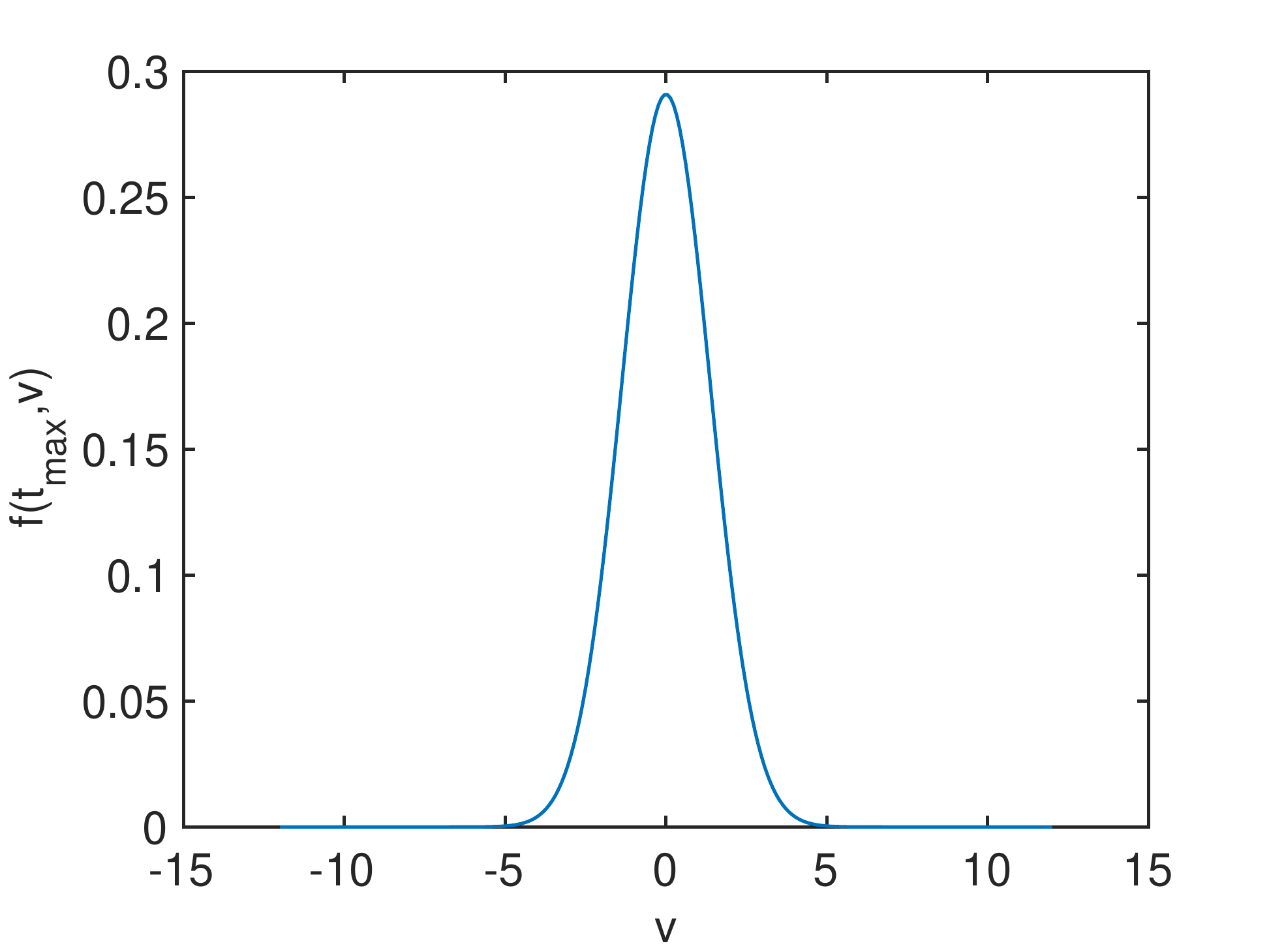}
		\caption{$\Delta t=0.5$.}
	\end{subfigure}
	\begin{subfigure}[t]{0.32\linewidth}
		\includegraphics[width=\linewidth]{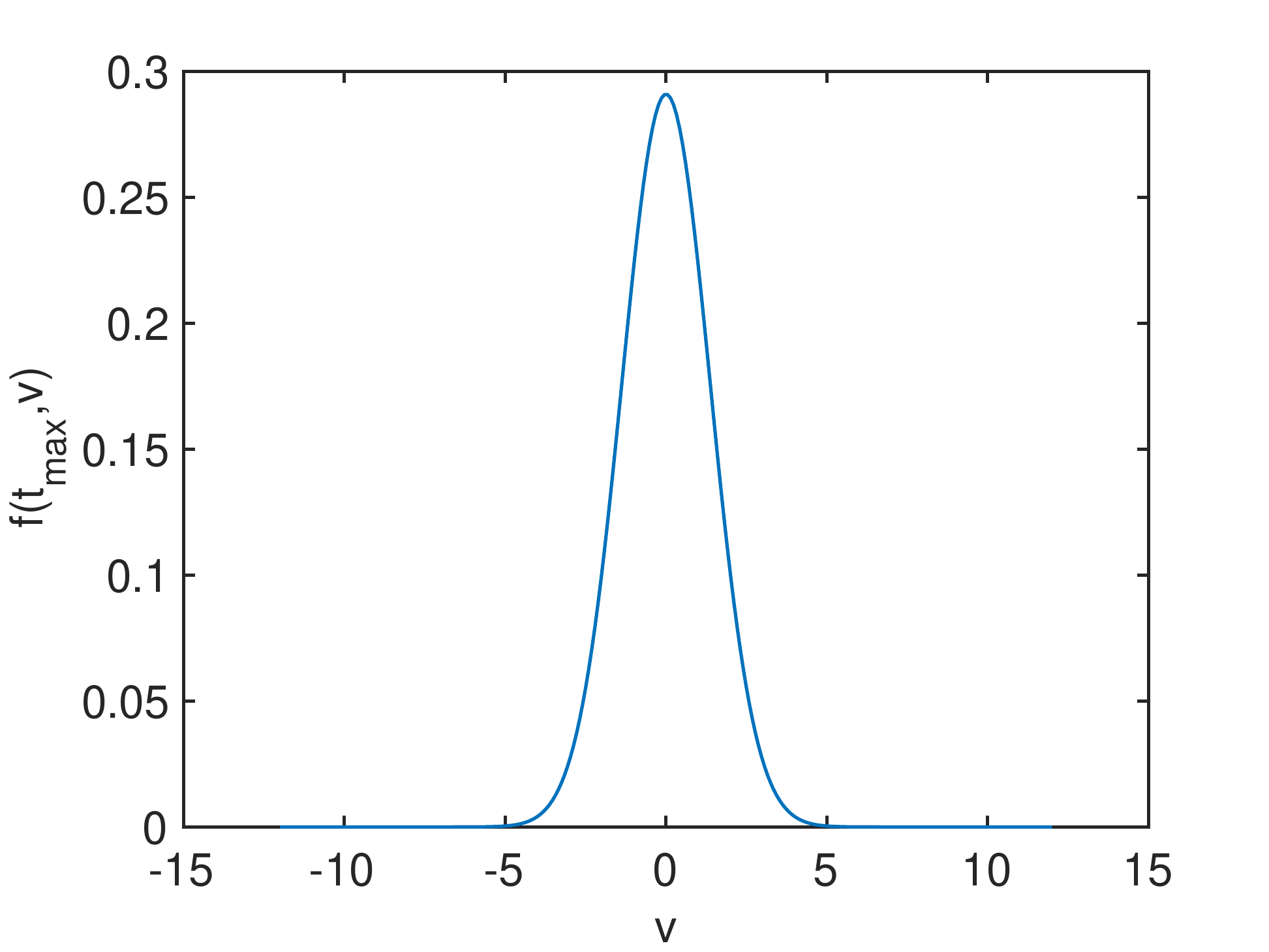}
		\caption{$\Delta t=3.05$.}
	\end{subfigure}
	\begin{subfigure}[t]{0.32\linewidth}
		\includegraphics[width=\linewidth]{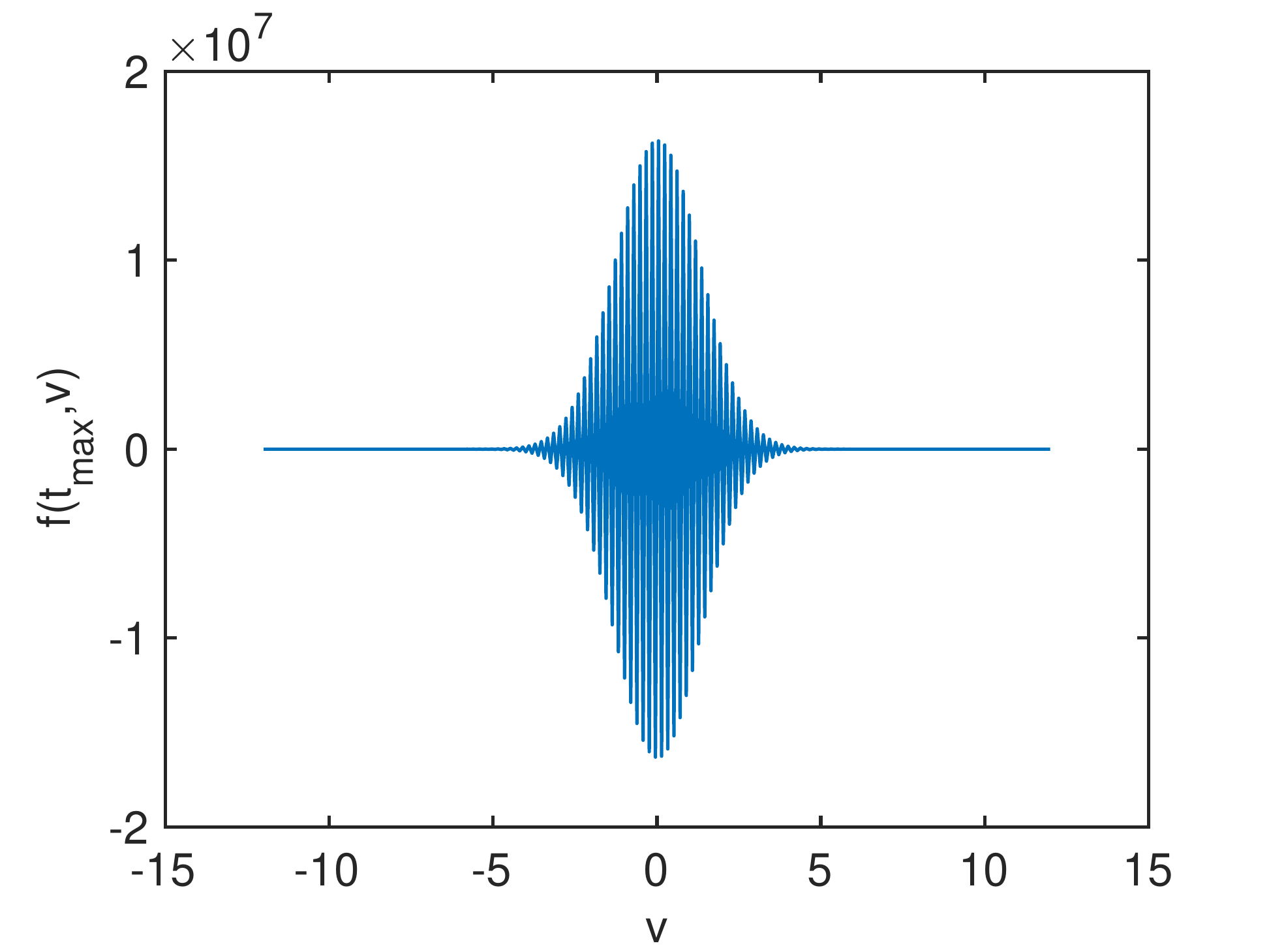}
		\caption{$\Delta t=3.085$.}
	\end{subfigure}
	\caption{Illustration of the CFL condition for RKC2: plots of $f(t_{\max}=300, v)$ with different time steps. }
	\label{fig:cfl}
\end{figure}

Now, still with $N_v=256$, $\nu=0.1$, $v_{\max}=12$, and final time $t_{max}=30$, we show in Figure \ref{fig:order2time} the second order in time of the RKC2 integrator. The values of $T$ and $u$ are calculated according to \eqref{eq:Tu}, and $s$ is calculated adaptively in order to guarantee stability according to strategies presented in \ref{adaptive}.  The used time steps are $\Delta t=0.75/2^j$, $j=1\dots,8$, and for the reference solution we use $\Delta t=3\times 10^{-4}$.

\begin{figure}[h!]
	\centering
	\includegraphics[scale=0.2]{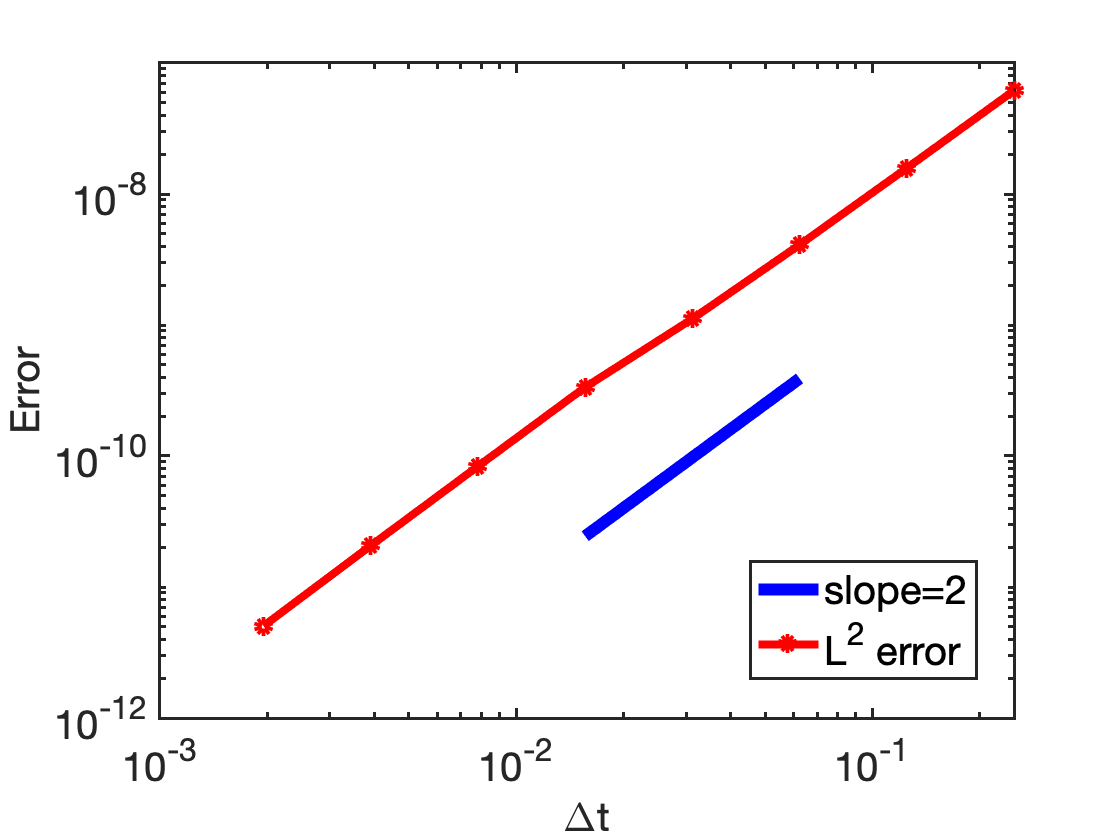}
	\caption{$L^2$ error at the final time $t_{\max}=30$ versus time step of the RKC2 scheme.}
	\label{fig:order2time}
\end{figure}

\subsubsection{Fourth order approximation}
We also consider the fourth order approximation \eqref{fourthorder} of the collision operator. 
First we illustrate the fourth order accuracy proven in Proposition \ref{fourth_order_prop}. In Figure \ref{fig:check_order4} (left),  
we plot the $L^2$ error 
$$
\Big(\sum_{j=0}^{N_v-1} |Q(f)(v_j)-Q_j|^2 \Delta v\Big)^{1/2}
$$ 
(for different $N_v=2^k, k=6, \dots, 13$ with $v_{\max}=14$) 
where $Q_j$ corresponds to the second or fourth order scheme, and $Q(f)(v_j)$ 
denotes the exact collision operator obtained for $f(v)=M_{0.1, 4, 1}(v)+M_{0.9, 0, 0.2}(v)$. The expected order of accuracy are recovered.  
Then, we consider the RKC method and we look at $f^\infty_j-M_{\rho, u, T}(v_j)$ where $(\rho, u, T)=(1, 0, 1.88)$ 
are the moments of the initial condition (which are preserved by the scheme) 
and $f^\infty_j$ is the numerical solution of \eqref{eq:hom} computed at a large time $t=1000$ so that the entropy does not change 
and the equilibrium is reached. We consider the following numerical parameters 
$\Delta t=2, s=20, v_{\max}=14$ and two different values for $N_v$.  The results in Figure \ref{fig:check_order4} (right)
show that the fourth order accuracy can be observed for the asymptotic numerical solution. Similar results are obtained 
using a standard RK2 time integrator, but using a much smaller time step.

\begin{figure}[h!]
	\centering
	$$
	\begin{array}{cc}
	\includegraphics[scale=0.2]{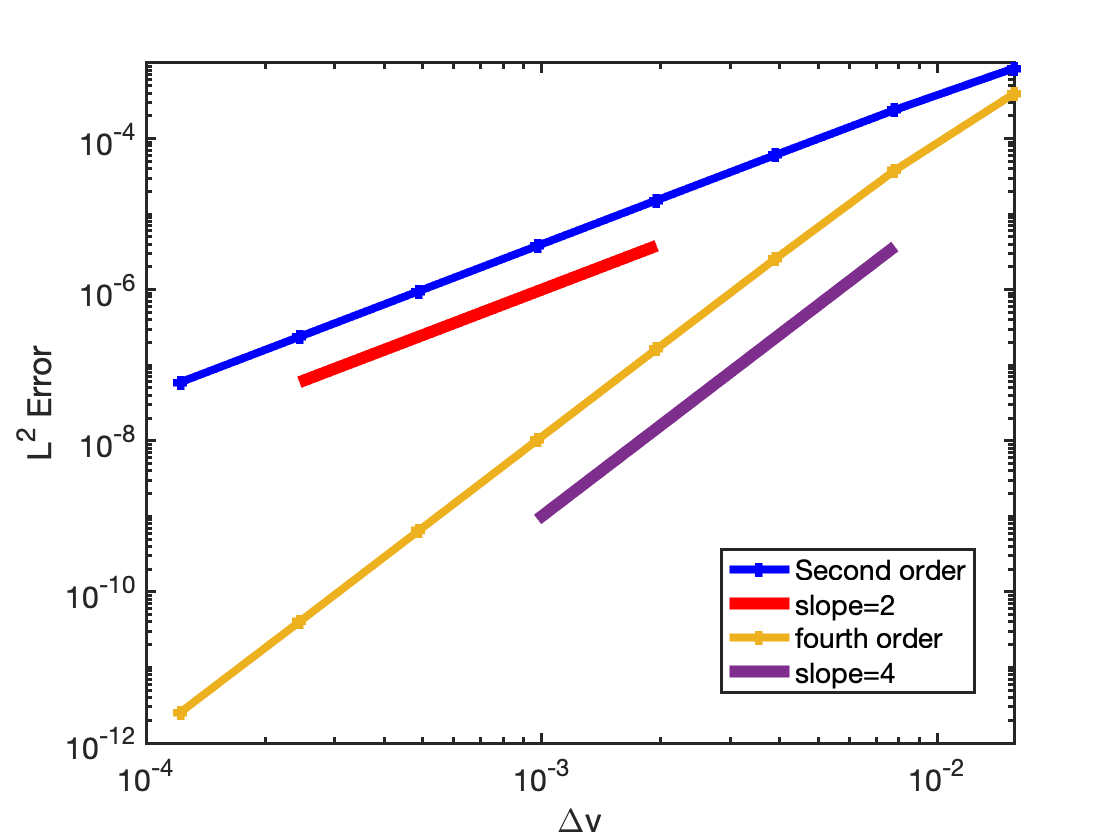} & \includegraphics[scale=0.2]{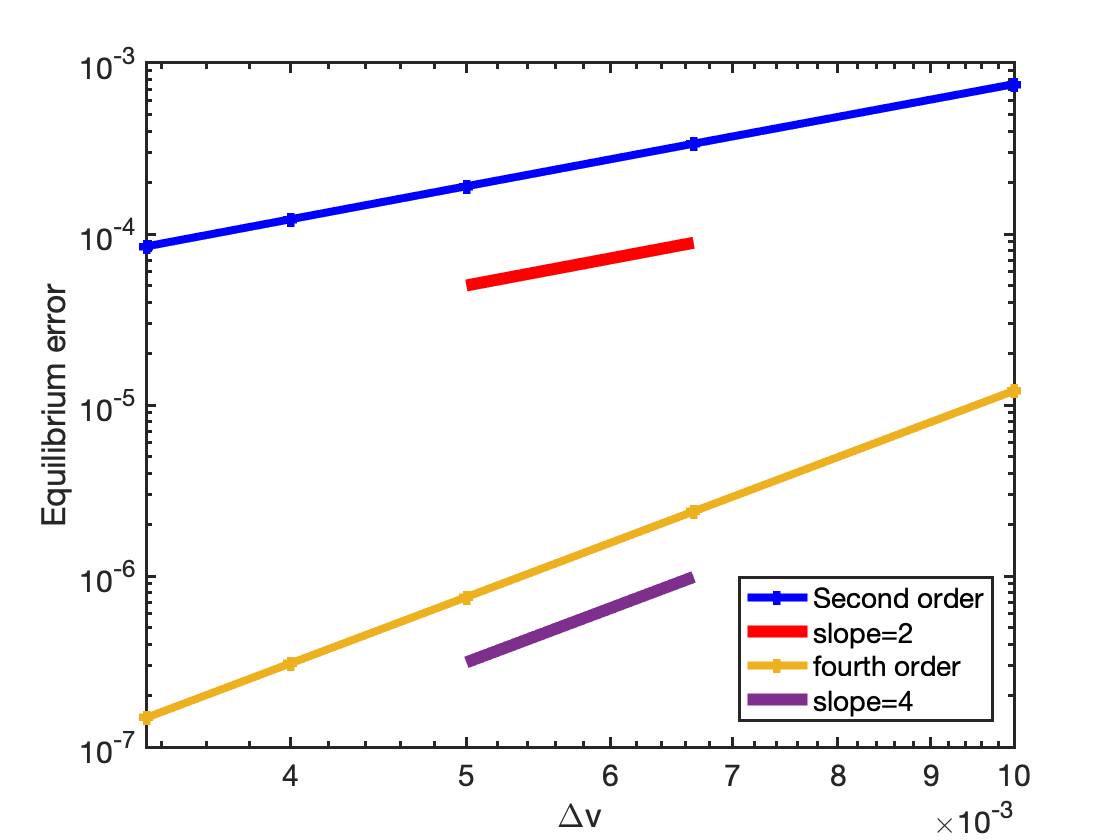} 
	\end{array}
	$$
	\caption{Left: $L^2$ error on $Q$ as a function of $N_v$ ($N_v=2^k, k=6, \dots,  13$) for the second order scheme \eqref{form1} and fourth order scheme \eqref{fourthorder}. 
		Right: $\max_j |f^\infty_j-M_{\rho, u, T}(v_j)|$ as a function of $N_v$ ($N_v=100, 150, 200, 250, 300$) for $\Delta t=2, t_f=1000$  for the second order scheme \eqref{form1} and fourth order scheme \eqref{fourthorder}.}
	\label{fig:check_order4}
\end{figure}


\subsubsection{Adaptive time stepping and conservation}
In this section, we test the performance of the algorithm with time step control strategy presented in \ref{adaptive} 
by considering the homogeneous problem \eqref{eq:hom}. The tolerance used throughout this section is $tol=10^{-6}$.

\paragraph{RKC2 versus RK2 }
We consider the same parameters as above ($N_v=256$, $\nu=0.1$, $v_{\max}=12$) and we compare the performance of the second order Runge's method (also called explicit midpoint rule or RK2) with Euler method for the error estimation and the RKC2 scheme with the error estimator explained in Section \ref{sec:est}. Table \ref{table:costexp} shows the advantage of RKC2 in terms of function evaluations and CPU time  for different values of final time. In Figure \ref{fig:history}, we plot the history of time steps for RKC2 and RK2 (semi-log scale) as well as the history of the number of stages used by RKC2 at each time step. The RK2 integrator faces a time step size restriction (CFL condition) which allows a maximum step size of around $10^{-2}$, while for RKC2 there is no such restriction which explains the advantage of stabilized methods illustrated in Table \ref{table:costexp}. Note that the number of rejected steps is negligible in all the cases for both methods.
\begin{table}
	\centering
	\begin{tabular}{ |c|c|c| } 
		\hline
		& RK2 & RKC2 \\ 
		\hline
		$Q$ evals & $8794$ & $1013$ \\ 
		\hline
		CPU time & $\approx 0.25s$ & $\approx 0.05s$ \\ 
		\hline
	\end{tabular}~~
	\begin{tabular}{ |c|c|c| } 
		\hline
		& RK2 & RKC2 \\ 
		\hline
		$Q$ evals & $42752$ & $1488$ \\ 
		\hline
		CPU time & $\approx 1.6s$ & $\approx 0.05s$ \\ 
		\hline
	\end{tabular}~~
	\begin{tabular}{ |c|c|c| } 
		\hline
		& RK2 & RKC2 \\ 
		\hline
		$Q$ evals & $85200$ & $1851$ \\ 
		\hline
		CPU time & $\approx 5s$ & $\approx 0.06s$ \\ 
		\hline
	\end{tabular}
	\captionof{table}{Cost comparison of the RKC2 and the RK2 methods for $t_{\max}=100$ (left), $t_{\max}=500$ (middle) and $t_{\max}=1000$ (right).}
	\label{table:costexp}
\end{table}

\begin{figure}[h!]
	\centering
	\begin{subfigure}{.49\textwidth}
		\includegraphics[width=\linewidth]{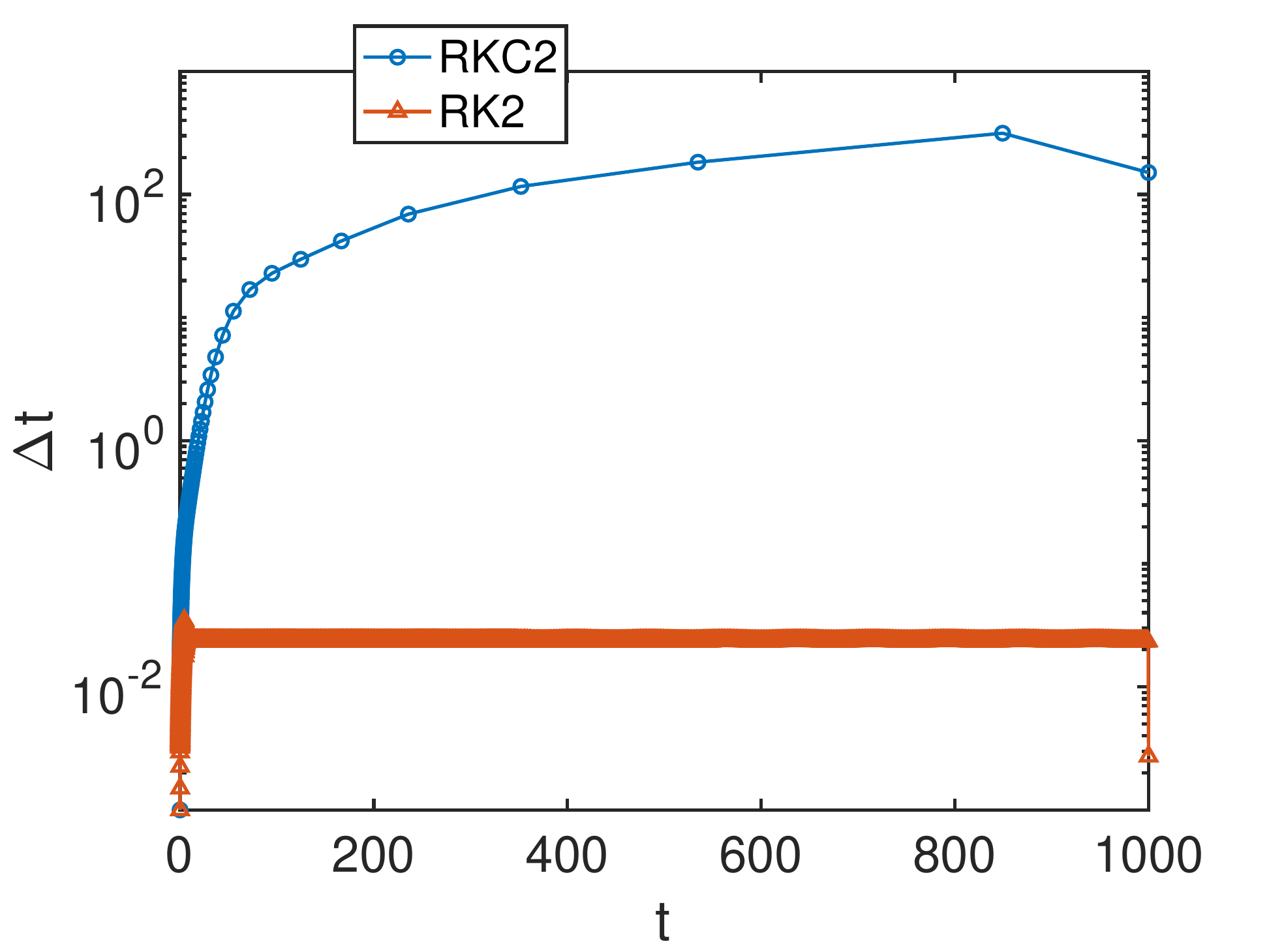}
		\caption{History of the time step size for RKC2 and RK2.}
	\end{subfigure}
	\begin{subfigure}{.49\textwidth}
		\includegraphics[width=\linewidth]{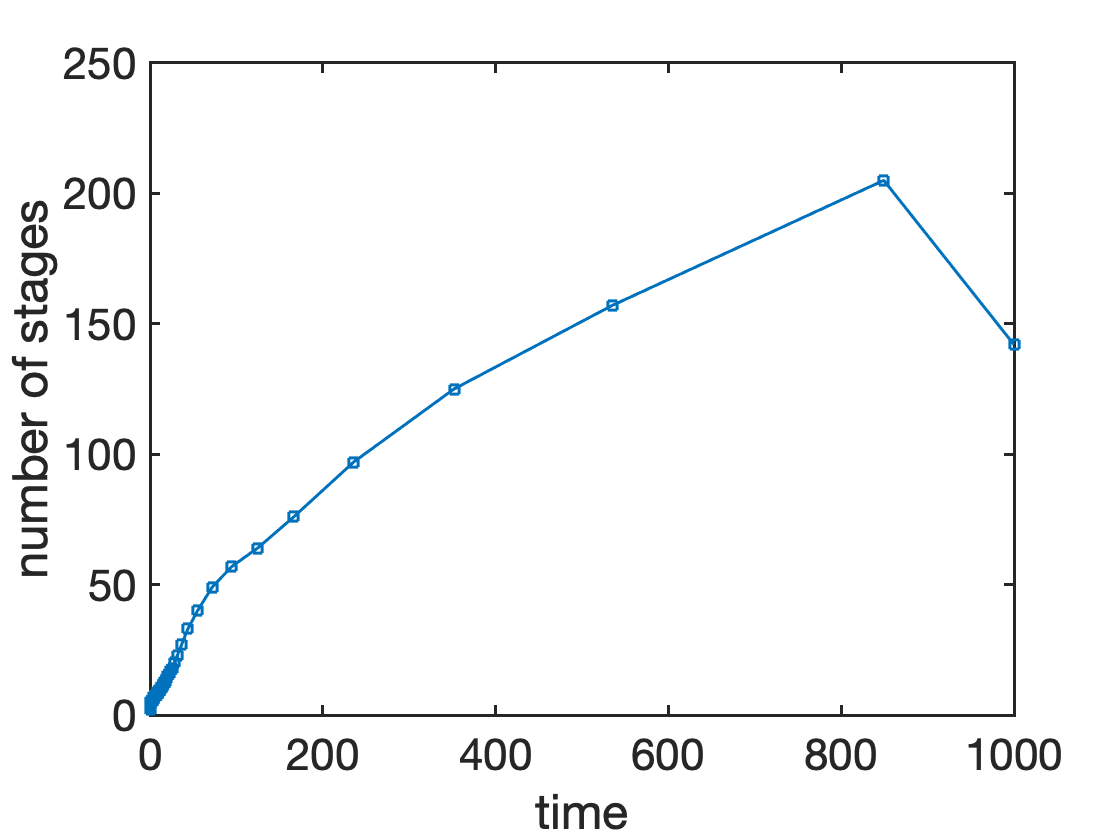}
		\caption{History of the stage number for RKC2.}
	\end{subfigure}
	\caption{History of the time steps and number of stages.}
	\label{fig:history}
\end{figure}

\paragraph{Conservation}
Finally, we investigate the properties of the velocity discretization coupled with the time integrator. To do so, we consider RKC2 with the adaptive time stepping 
presented in \ref{adaptive} with the following parameters $N_v=256$, $\nu=0.1$, $v_{\max}=16$, $tol=10^{-6}$ and $\Delta t_0=10^{-3}$ (note that 
$v_{\max}$ is larger to neglect the effect of the truncation ; otherwise, specific boundary conditions can be employed as in \cite{dellacherie}). 
In Figure \ref{fig:cons_hom}, the time history of  the deviation\footnote[1]{We define the deviation D(t) of a time dependent quantity Q(t) by D(t)=Q(t)-Q(0).} of mass, momentum and energy is displayed, as well as the time history of the 
entropy ${\cal E}(t) = \int f^2(t)/M dv$ and of $\|f(t, \cdot) - M(\cdot)\|_{L^2}$ (with $M$ the constant Maxwellian associated to the initial condition \eqref{init_hom}). 
First, we observe the conservation of the three invariants up to machine accuracy. Second, the entropy enjoys a very rapid decay  
relaxation of the solution towards an equilibrium which corresponds to the Maxwellian $M$ which shares the three first moments of \eqref{init_hom}  
as observed in the right of Figure  \ref{fig:cons_hom}. Even if the entropy decay can be proved in the semi-discrete case, it is not an easy task to prove it in the fully discrete case. 
\begin{figure}[tb]
	\centering
	\begin{subfigure}[t]{0.32\textwidth}
		\includegraphics[width=\linewidth]{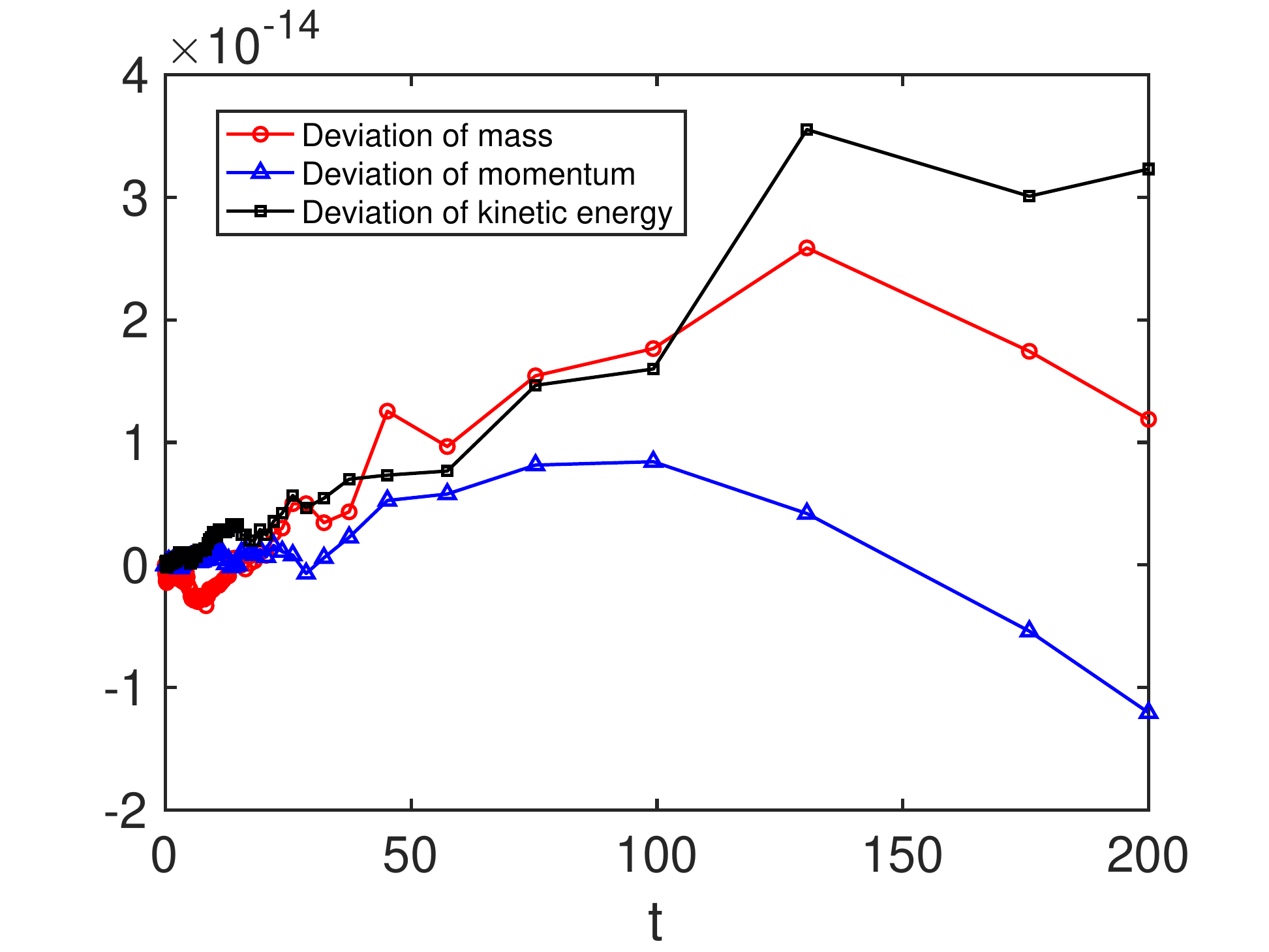} 		
	\end{subfigure}
	\begin{subfigure}[t]{0.32\linewidth}
		\includegraphics[width=\linewidth]{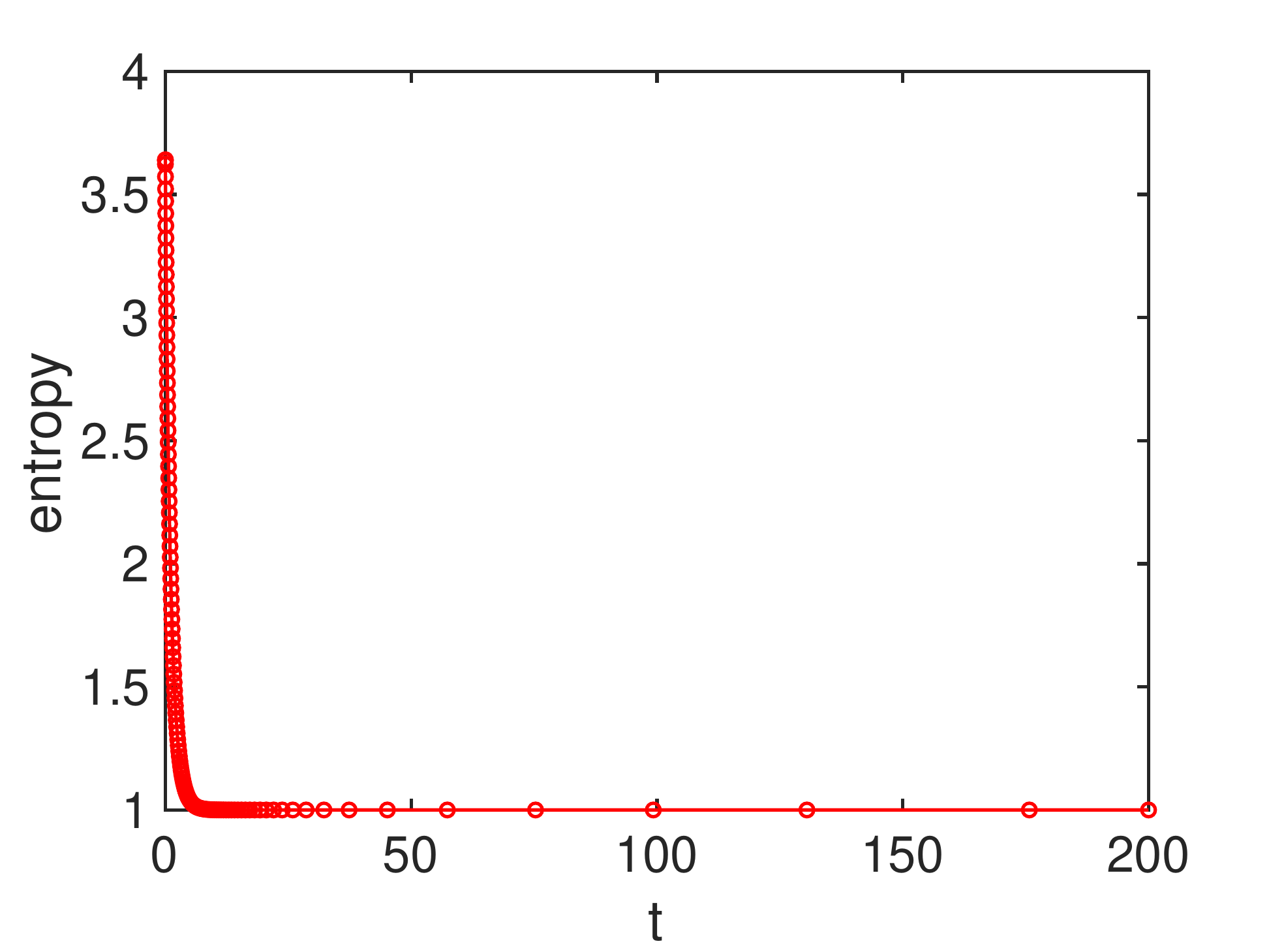}
	\end{subfigure}
	\begin{subfigure}[t]{0.32\linewidth}
		\includegraphics[width=\linewidth]{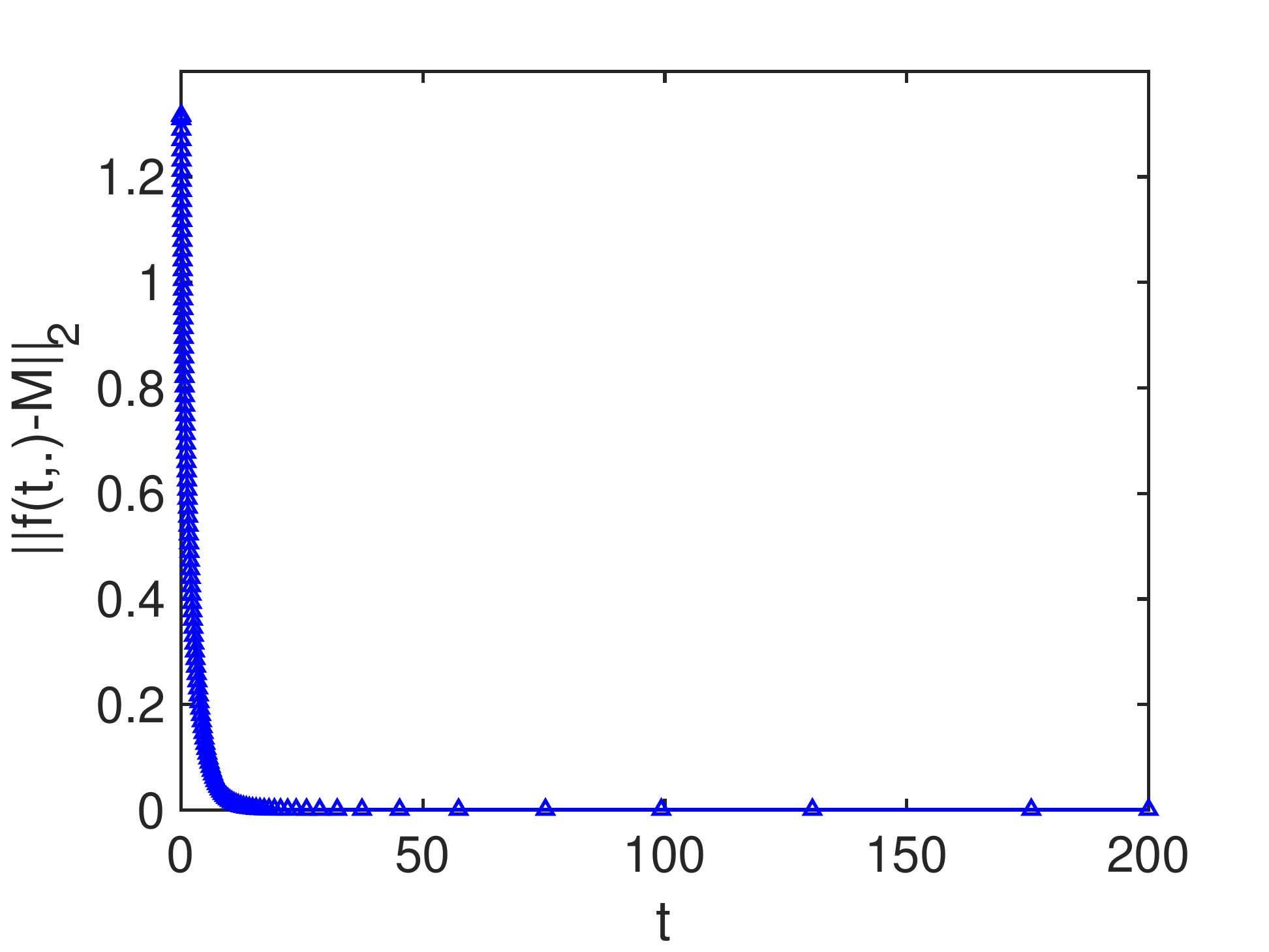}
	\end{subfigure}
	\caption{{Conservation properties of \eqref{eq:hom} solved with the RKC2 method.}  Left: Time history of the deviation of mass, momentum, and energy. Middle: Time evolution of ${\cal E}(t)=\int f^2/M dv$. Right: Time evolution of $\|f(t,\cdot) - M \|_{L^2}$.}
	\label{fig:cons_hom}
\end{figure}

\subsection{Non-homogeneous case 1dx-1dv}
Here, we solve the VPFP equation \eqref{vp_fp} for $d_x=d_v=1$ with the two different methods proposed in Section \ref{sec:coupling_vlasov}, namely 
SL-RKC and  SL-RK2-RKC. 
Different tests are proposed to illustrate the capability of the approach in the non-homogeneous case.

\paragraph{Test 1}
We start with the following initial data 
$$
f(0,x,v)=\frac{1}{\sqrt{2\pi }}e^{-\frac{v^2}{2}}(1+10^{-3}\cos(k x)), \;\; x\in [0, 2\pi/k], 
$$
with $k=0.5$. The results of this so-called (collisionless, $\nu=0$) Landau damping test are displayed on Figure \ref{fig:slope}: the time evolution of the electric energy (left) 
and the total mass, momentum and total energy. First, the expected damping rate is observed on the electric energy and the three invariants 
are preserved up to machine accuracy. 
The method used here is SL-RK2-RKC with $\Delta t=0.1$ is chosen quite small to ensure the stability of RK2 since in the collisionless regime the RKC is disabled and so its error estimator. 

\begin{figure}[h!]
	\centering
	\begin{subfigure}[t]{0.49\textwidth}
		\includegraphics[width=\linewidth]{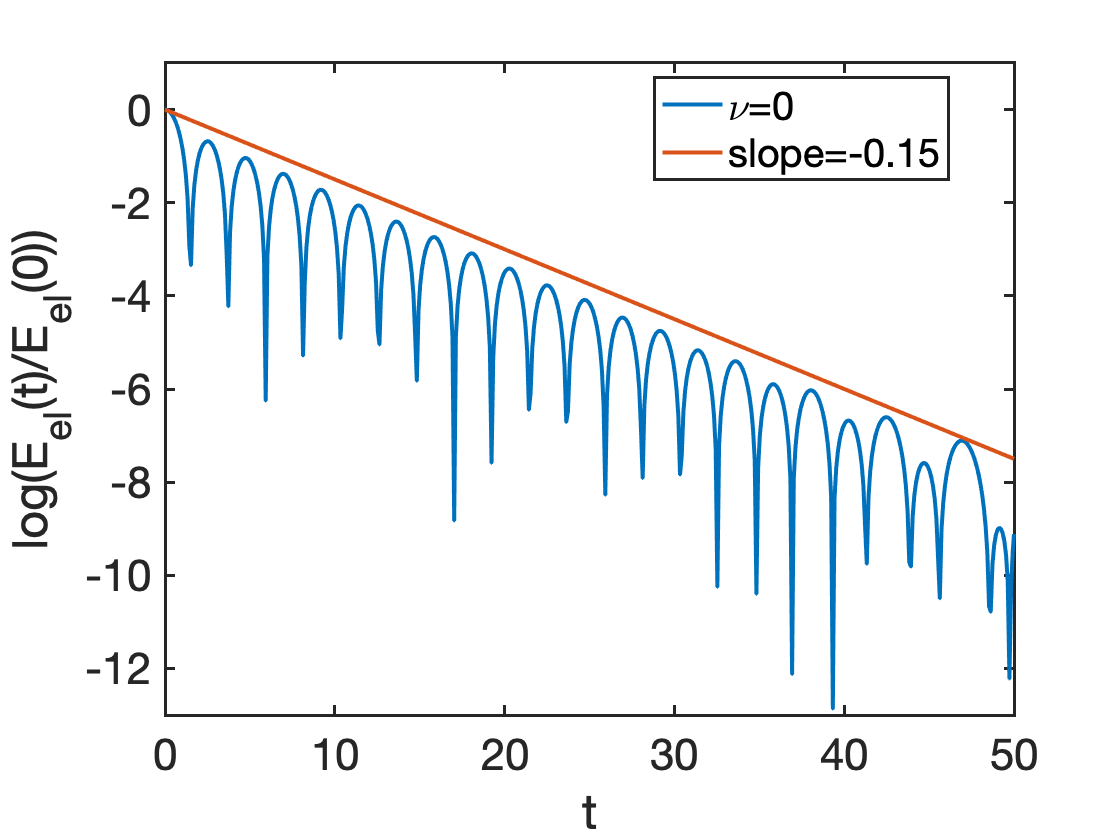}
		\caption{Time history of the electric energy.}
	\end{subfigure}
	\begin{subfigure}[t]{0.49\linewidth}
		\includegraphics[width=\linewidth]{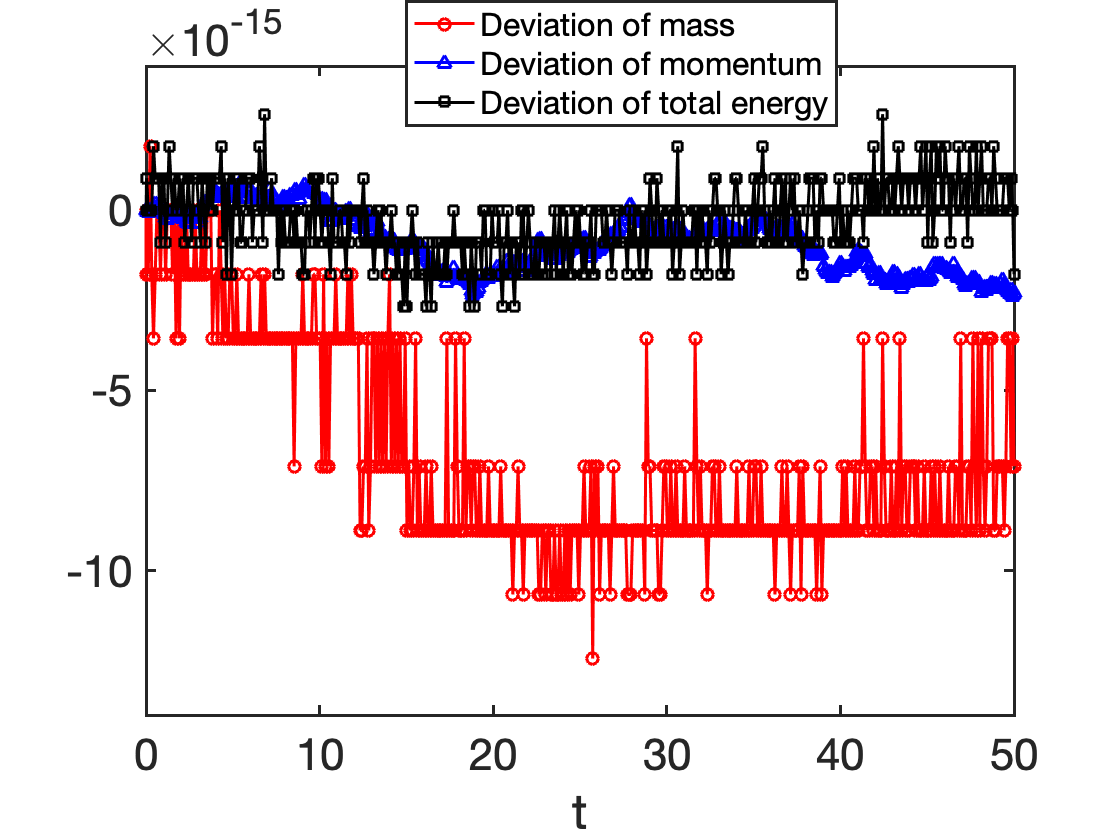}
		\caption{Time history of the deviation of total mass, momentum and total energy.}
	\end{subfigure}
	\caption{Test 1 (1dx-1dv) {solved with SL-RK2-RKC:} $N_x=128,~N_v=256,~v_{\max}=12,~\nu=0$.}
	\label{fig:slope}
\end{figure}

\paragraph{Test 2} 
Then, we consider 
$$
f_h(v)=\frac{1}{\rho_h\sqrt{2\pi}}v^\gamma e^{-\frac{v^2}{2}} \quad\text{where}
\quad \rho_h=\int_{v_{\min}}^{v_{\max}}\frac{1}{\sqrt{2\pi}}v^\gamma e^{-\frac{v^2}{2}}dv, \;\;\; f_c=\frac{1}{\sqrt{2\pi T_c}}e^{-\frac{v^2}{2T_c}}, 
$$
so that the initial data is 
$$
f(0,x,v)=(\alpha f_c(v) + (1-\alpha) f_h(v))(1+\beta\cos(k x)), 
$$
where $\gamma=10, \beta=k=0.5$, $T_c=0.2$, and $\alpha=0.9$. We consider here $\nu=0.1$, $v_{max}=14$, and $t_{max}=200$. {We use the adaptive time stepping strategy with initial time step equals to $10^{-3}$ and tolerance $10^{-6}$.}  



We compare the two approaches SL-RKC and SL-RK2-RKC. Both methods preserve mass and momentum but SL-RKC does not preserve the total energy 
(deviation of magnitude $10^{-4}$) whereas SL-RK2-RKC does. The time history of the mass, momentum and total energy are plotted in Figure \ref{fig:etot_dev}. 

{Regarding the cost, SL-RKC needed 360 time steps to perform the integration with the maximum allowed step size equals to $12.1$, while SL-RK2-RKC required 550 time steps with the maximum step size equals to $1.427$. The SL-RK2-RKC method needs more steps since the choice of the step size is still restricted by the advection CFL because of the use of RK2 for the term $E\partial_vf$. This term is integrated in the collision operator when using SL-RKC and it is usually small compared to the diffusion term which does not affect the stability of the scheme. 
}


\begin{figure}[h!]
	$$
	\begin{array}{cc}
	\includegraphics[scale=0.4]{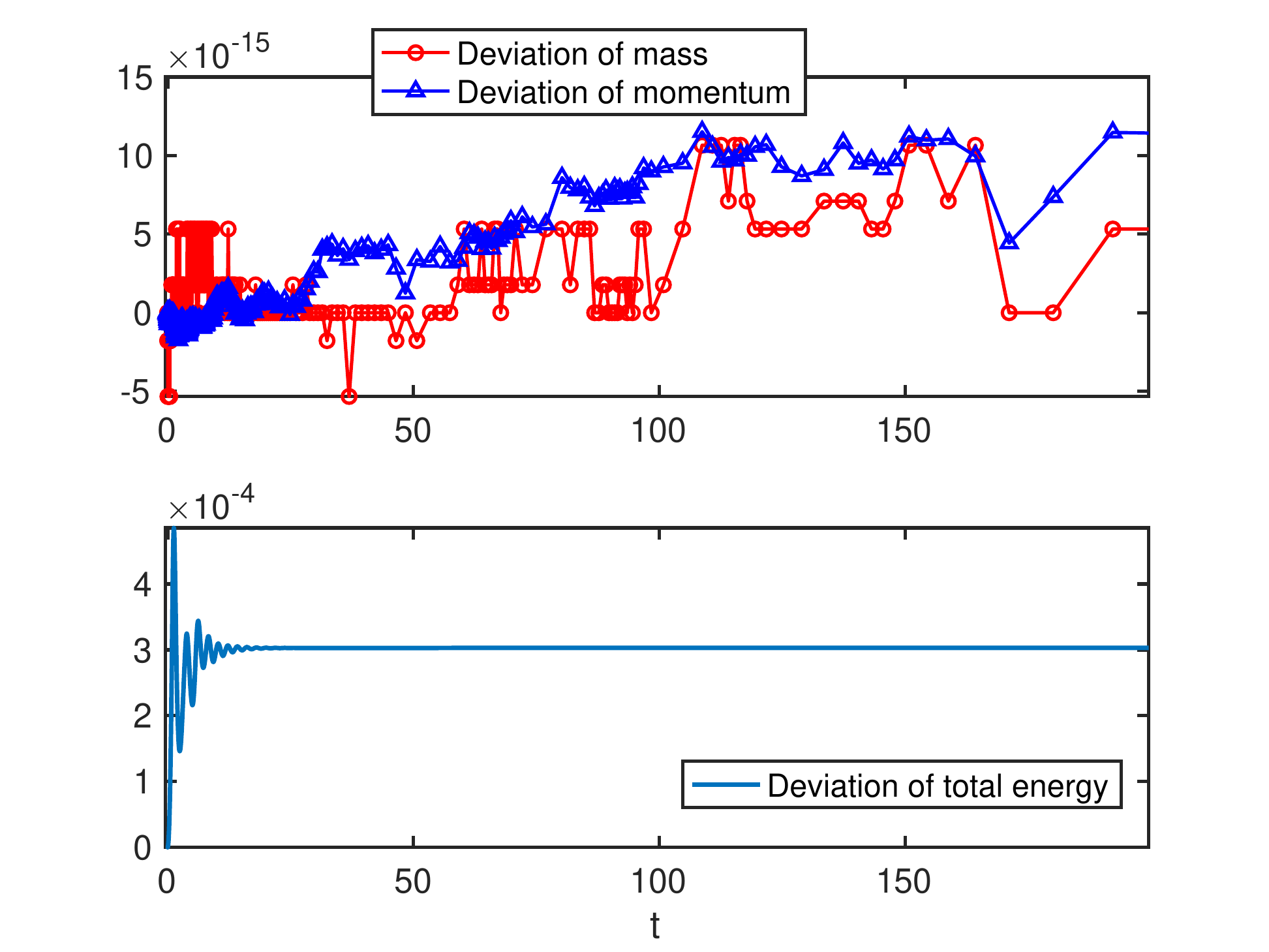}&
	\includegraphics[scale=0.4]{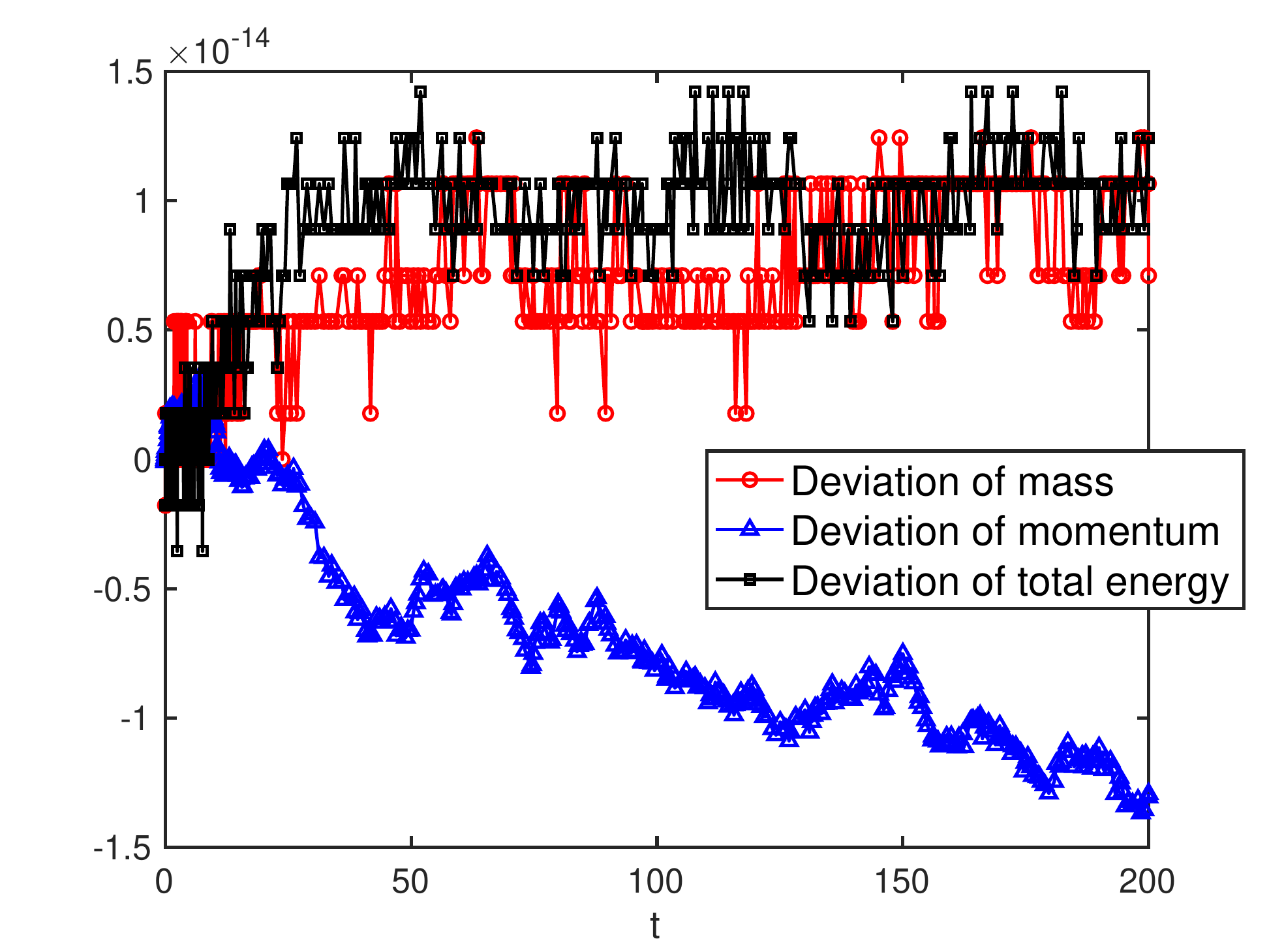}
	\end{array}
	$$
	\caption{Test 2 (1dx-1dv): Deviation of {mass, momentum}, and total energy for SL-RKC (left) and SL-RK2-RKC (right).}
	\label{fig:etot_dev}
\end{figure}


%

\paragraph{Test 3 }  
We consider a test from \cite{valentini} in which the initial condition is 
$$
f_0(x, v) = \Big(M_{n_1, 0, 1} + M_{n_2/2, u, 0.2} + M_{n_2/2, -u, 0.2}\Big) (1+\varepsilon \cos(kx)), 
$$
with $n_1=0.97, n_2=0.03, u=4, x\in [0, L], L=22, k=2\pi/L$, $\varepsilon=0.00056$. The numerical parameters 
are $N_x=128, N_v=256$ for the phase space discretization ($v_{\max}=14$) whereas $\Delta t=1$ thanks to the use of SL-RK2-RKC. 

In Figure \ref{fig:valentini_logelec}, we display the time evolution of the electric energy (semi-log scale) for a collisionless 
case (left) and with $\nu=5\times 10^{-4}$ (right). First, an exponential growth is observed with rate $\gamma=0.0746$  
which is very close to the theoretical value proposed in \cite{valentini}. Then, nonlinear effects saturate the exponential growth 
and the oscillations driving the particles trapping process are observed. When $\nu>0$, the evolution changes significantly: 
even if the exponential growth remains close to the collisionless case, the nonlinear phase is strongly affected by collisions 
since the saturation amplitude decreases as collisions smooth out the trapping effects and drives the system  towards 
a Maxwellian shape. The influence of the collisions on the trapping process can be seen in Figure \ref{fig:valentini_fxv} 
where we plot the phase space distribution function at time $t_1=80$ and $t_2=220$ in the collisionless and collision case. 
The time $t_1$ corresponds to the end of the linear phase and $t_2$ corresponds to the nonlinear phase. 
In the collisionless case (figure \ref{fig:valentini_fxv}, top row), the vortex around the phase velocity $v_\phi\approx 3.5$ 
can be observed whereas for $\nu>0$ (figure  \ref{fig:valentini_fxv}, bottom row), the trapping structure is smoothed out. 
Finally in Figure \ref{fig:valentini_etot}, the time evolution of total mass, momentum and energy is plotted 
and we can see that in both collisionless and collision cases, these three invariants are preserved up to machine accuracy. 

\begin{figure}[h!]
	$$
	\centering
	\begin{array}{cc}
	\hspace{-0.75cm}\includegraphics[scale=0.2]{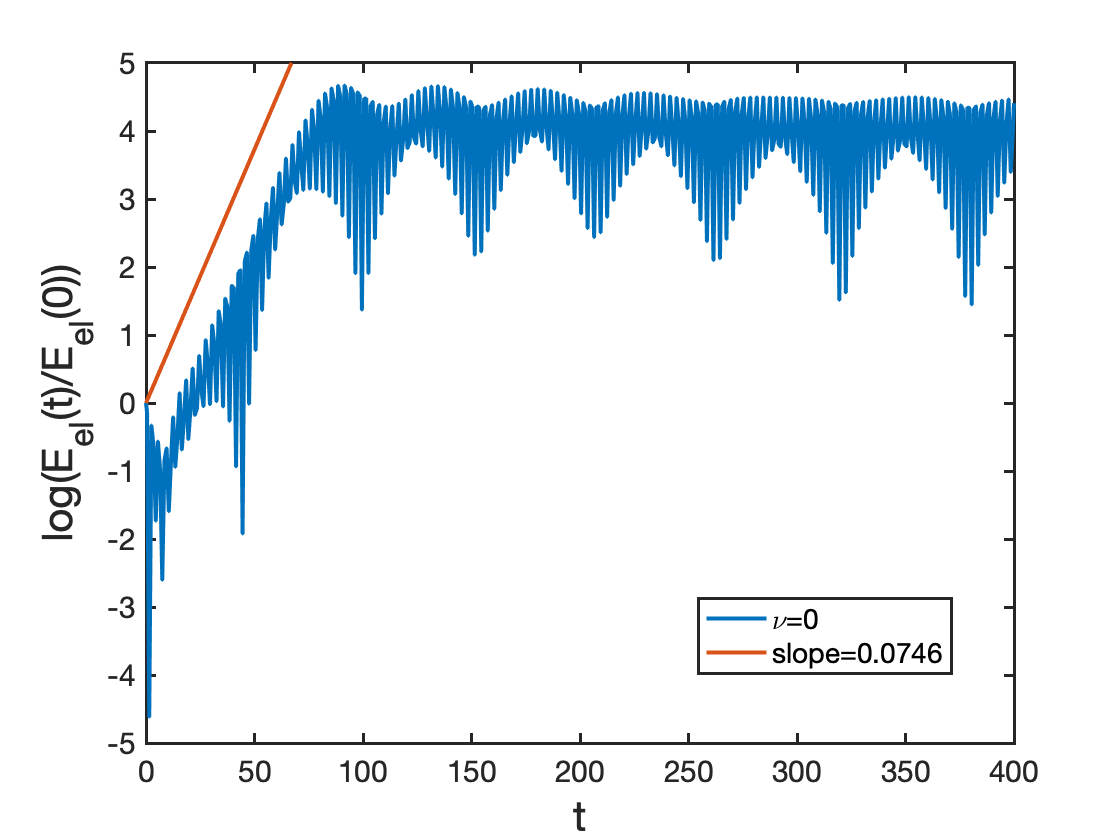} & \hspace{-0.85cm}\includegraphics[scale=0.2]{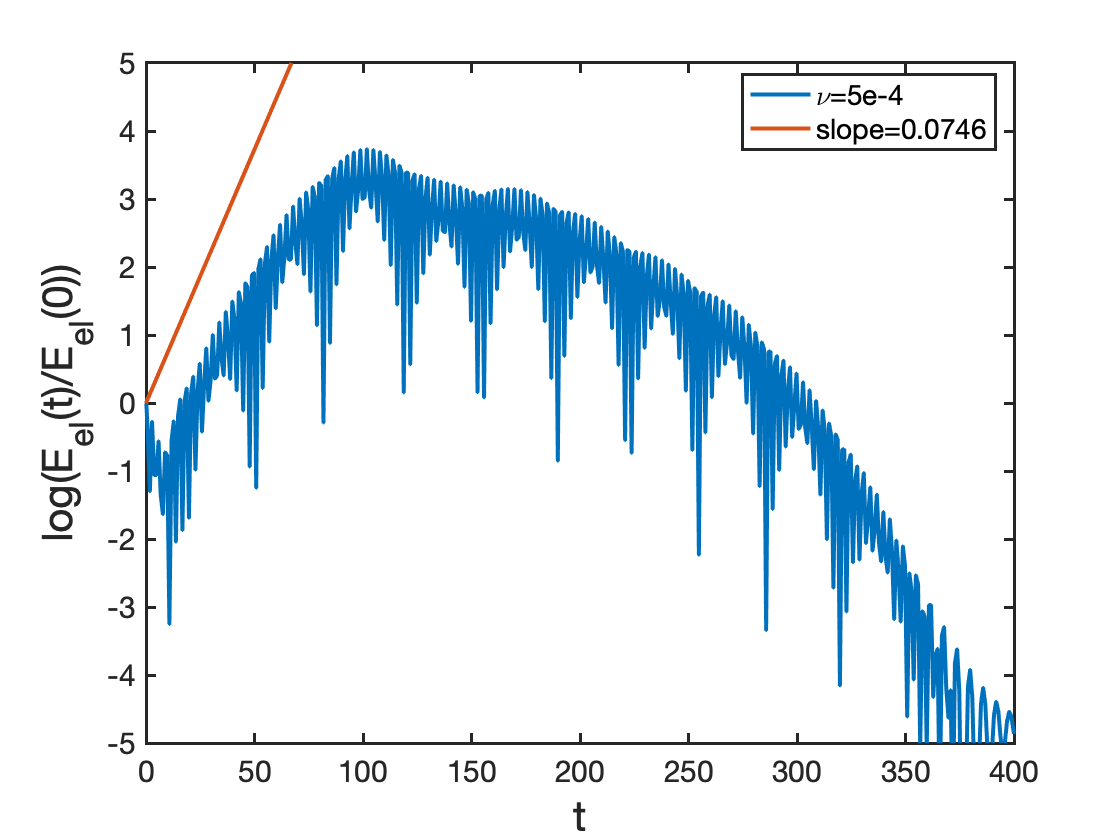}
	\end{array}
	$$
	\caption{Test 3 (1dx-1dv): $N_x=128$, $N_v=256$, $v_{\max}=14$, $\Delta t=1$. Left: $\nu=0$, right: $\nu=5\times 10^{-4}$ .}
	\label{fig:valentini_logelec}
\end{figure}

\begin{figure}[h!]
	$$
	\centering
	\begin{array}{ccll}
	\hspace{-0.75cm}\includegraphics[scale=0.2]{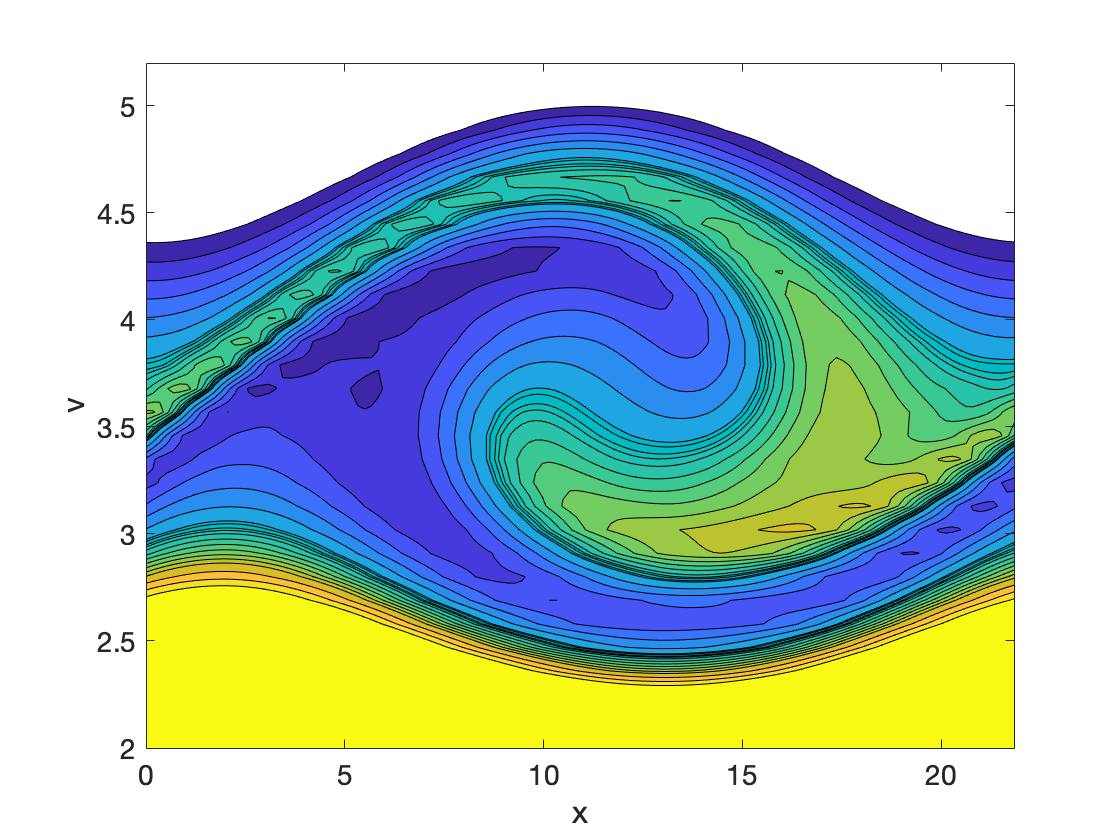} & \hspace{-0.85cm}\includegraphics[scale=0.2]{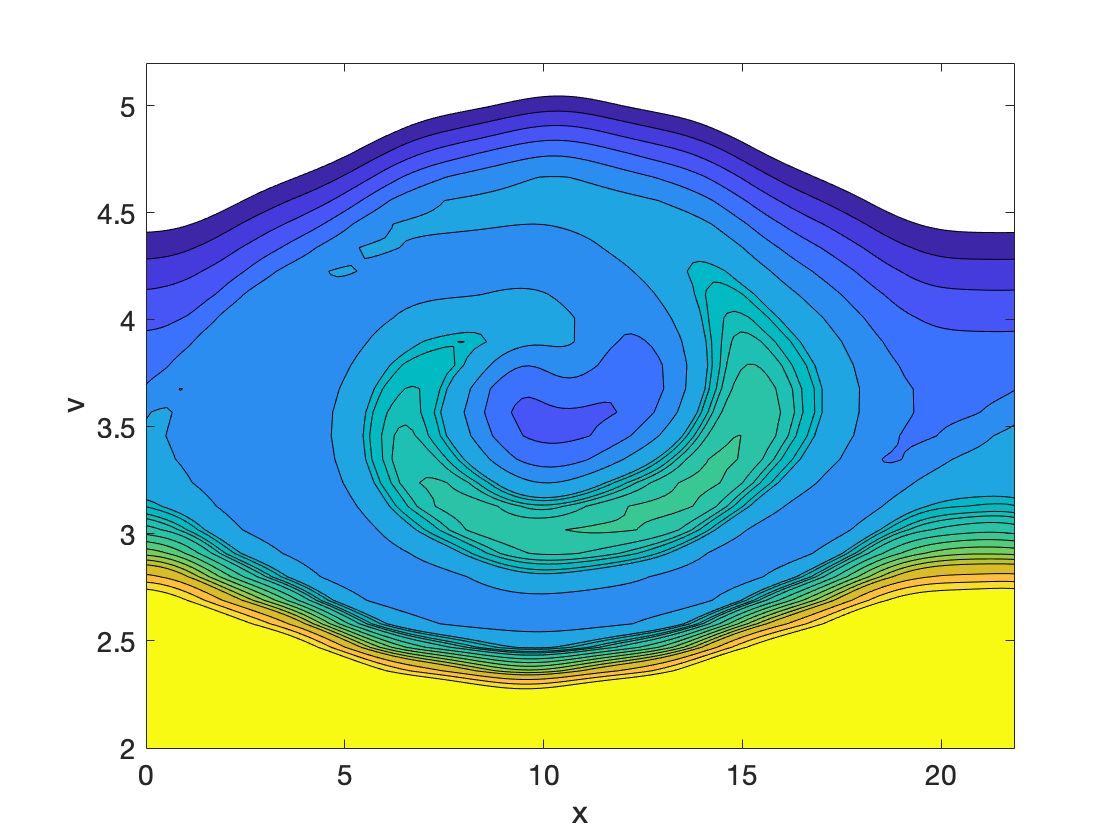}\\
	\hspace{-0.75cm}\includegraphics[scale=0.2]{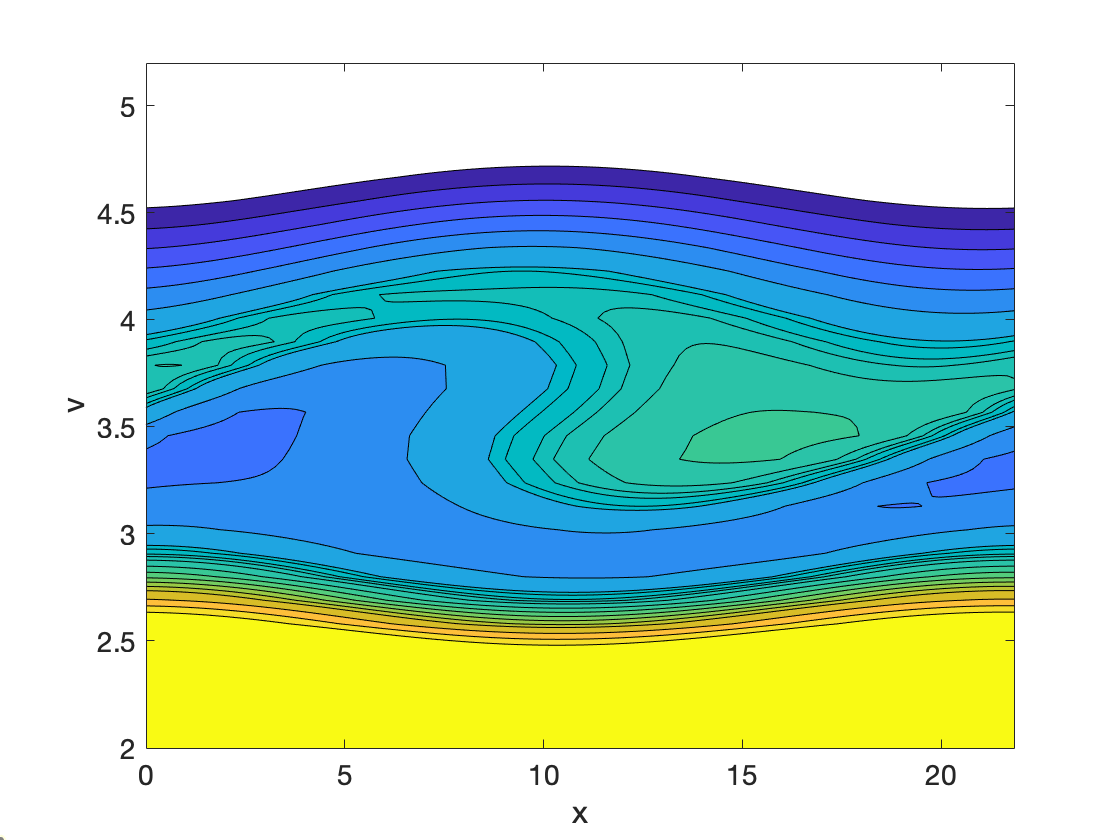} & \hspace{-0.85cm}\includegraphics[scale=0.2]{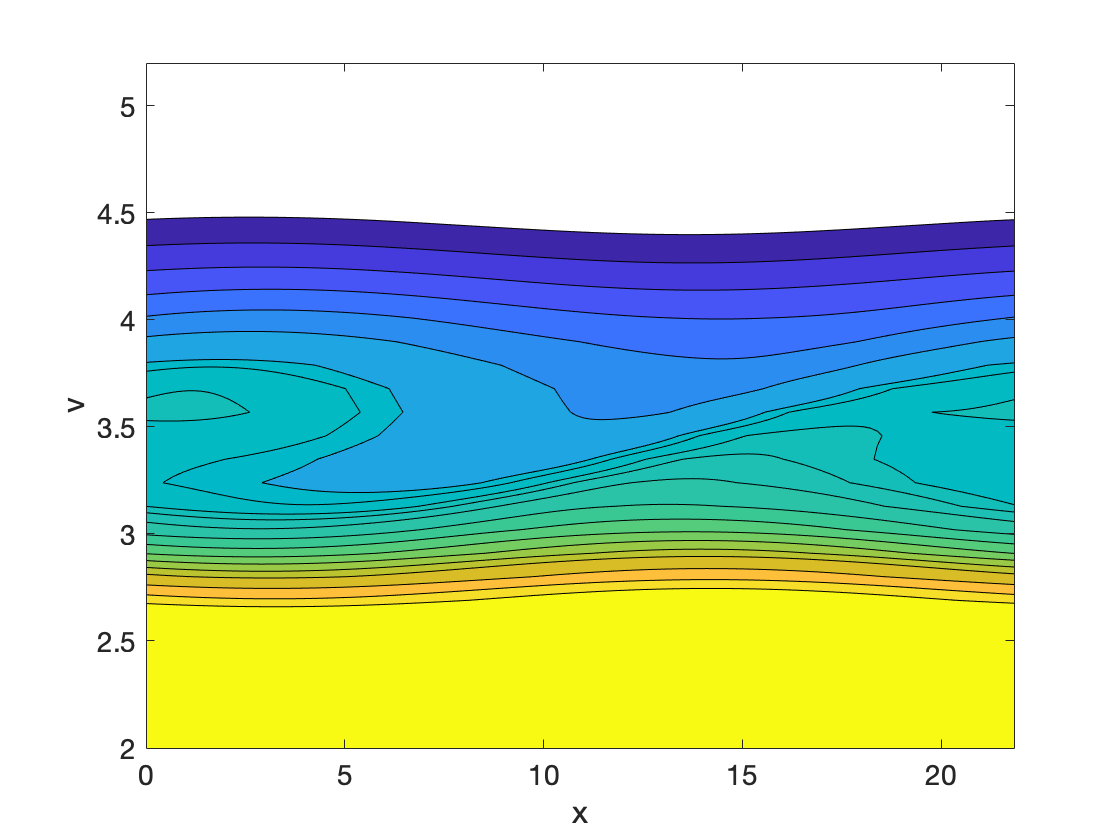}\
	\end{array}
	$$
	\caption{Test 3 (1dx-1dv): $N_x=128$, $N_v=256$, $v_{\max}=14$, $\Delta t=1$. First row: $\nu=0$, $t=80, 220$. Second row: $\nu=5\times 10^{-4}$, $t=80, 220$.}
	\label{fig:valentini_fxv}
\end{figure}


\begin{figure}[h!]
	$$
	\centering
	\begin{array}{cc}
	\hspace{-0.75cm}\includegraphics[scale=0.4]{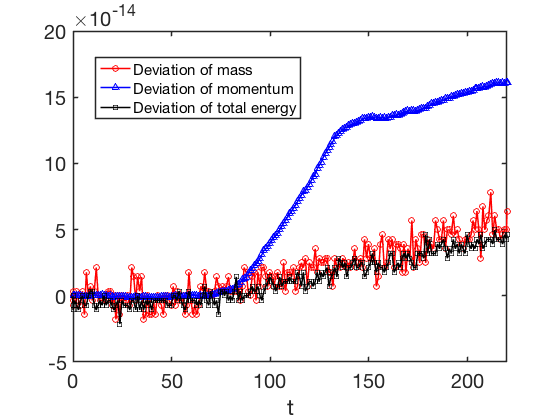} & \hspace{-0.85cm}\includegraphics[scale=0.4]{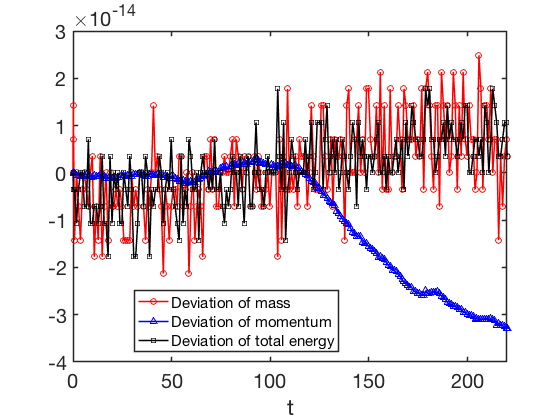}
	\end{array}
	$$
	\caption{Test 3 (1dx-1dv): $N_x=128$, $N_v=256$, $v_{\max}=14$, $\Delta t=1$. Time evolution of the total energy. Left: $\nu=0$, right: $\nu=5\times 10^{-4}$.}
	\label{fig:valentini_etot}
\end{figure}

\subsection{Non-homogeneous case 1dx-2dv}
Here, we solve the VPFP equation satisfied by $f(t, x, v)$ with $v=(v_x, v_y)$ 
$$
\partial_t f + v_x \partial_x f + E_x\partial_{v_x} f = \nu \nabla_v \cdot ((v-u_f)f + T_f\nabla_v f), 
$$
with $n_f u_f=\int vf dv_x dv_y$ and  $2 n_f  T_f=\int |v-u_f|^2 f dv_x dv_y$. 
In the following, we denote the Maxwellian by $M_{\rho, u_x, u_y, T}=\frac{\rho}{2\pi T}\exp(-[(v_x-u_x)^2+(v_y-u_y)^2]/(2T))$. 
The numerical scheme \eqref{form1} extends naturally to the multi-dimensional case and details are given in \ref{extension_2d}.
{In this part, we use three steps splitting (second order Strang version)  given by 
	\begin{itemize}
		\item Solve the transport in space $\partial_t f + v\partial_x f = 0$ using a semi-Lagrangian method or Fourier technique. 
		\item Solve the Poisson equation and solve the transport in velocity $\partial_t f + E\partial_v f = 0$ using a semi-Lagrangian method or Fourier technique. 
		\item Solve $\partial_t f = \nu Q(f)$ using several time integrators (RK2, implicit Euler or RKC). 
\end{itemize}}

\paragraph{Test 1}
The initial condition is given by 
$$
f(t=0, x, v_x, v_y) = M_{1, 0, 0, 1}  (1+10^{-4} \cos(k x)), 
$$
with  $\alpha=0.5$ whereas the spatial domain is $x\in [0, 2\pi/k]\,(k=0.3)$ and the velocity domain is $v=(v_x, v_y)\in [-7, 7]^2$. 
We investigate here the weakly collisional regime ($\nu$ small) on the Landau damping phenomena (see \cite{cf, tristani}). 

For the weakly collisional Landau damping, we consider $N_x=32, N_{v_x}=N_{v_y}=64$. The linear theory predicts a damping 
in the collisionless case, ie the root of the dispersion relation is $\omega=1.1598 - 0.0126i$. In Figure \ref{fig:ll_f2d}, we plot the time history of the electric energy (in semi-log sale) 
for RKC2 and $\Delta t=0.3, s=6$. Let us remark that with this time step, RK2 is unstable since it requires $\Delta t\approx 10^{-2}$. 
For different values of the collision frequency $\nu=0, 0.025, 0.05, 0.1$ we can observe that the electric energy is damped exponentially fast in time 
according to the collisionless Landau theory (the numerical damping $-0.012$ is in very good agreement with the linear theory $-0.0126$)  
in addition to the effect of the collisions which induces an additional damping rate (see  \cite{cf, tristani}).

\begin{figure}[h!]
	\centering
	\begin{subfigure}[t]{0.49\textwidth}
		\includegraphics[width=\linewidth]{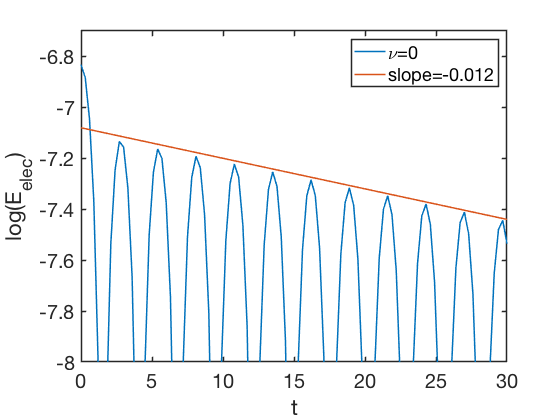}
	\end{subfigure}
	\begin{subfigure}[t]{0.49\linewidth}
		\includegraphics[width=\linewidth]{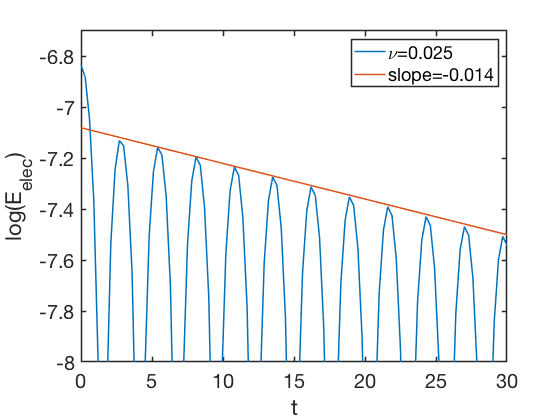}
	\end{subfigure}
	\begin{subfigure}[t]{0.49\linewidth}
		\includegraphics[width=\linewidth]{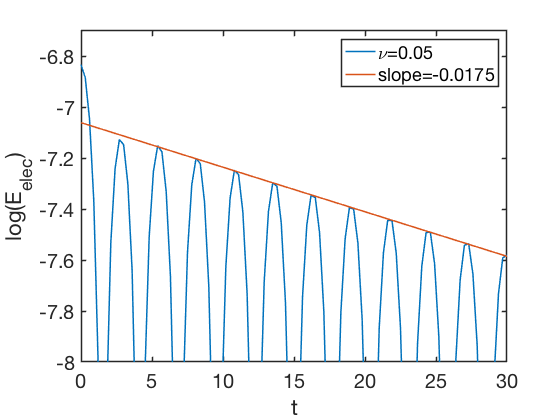}
	\end{subfigure}
	\begin{subfigure}[t]{0.49\linewidth}
		\includegraphics[width=\linewidth]{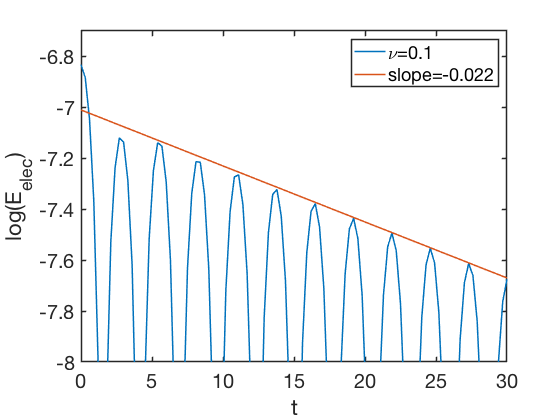}
	\end{subfigure}
	\caption{Test 1 (1dx-2dv): $N_x=32,~N_{v_x}=N_{v_y}=64,~v_{\max}=7, \Delta t=0.3, s=3$. Influence of $\nu=0, 0.025, 0.05, 0.1$ 
		on the time evolution of the electric energy.}
	\label{fig:ll_f2d}
\end{figure}

\paragraph{Test 2}
Lastly, we consider the following  initial condition 
$$
f(t=0, x, v_x, v_y) = \Big(\frac{(1-\alpha)}{4}\sum_{i=1}^4 M_{1, \pm 3, \pm 3, 1/2}  + \alpha M_{1, 0, 0, 1} \Big) (1+0.01 \cos(k x)), 
$$
with $\alpha=0.5$ whereas the spatial domain is $x\in [0, 2\pi/k]\, (k=0.5)$ and the velocity domain is $v=(v_x, v_y)\in [-18, 18]^2$. 
The velocity dependency of the initial condition is plotted in Figure \ref{fig:f2d} (left) for $x=\pi/k$.  We choose the collisions frequency $\nu=0.1$.

We compare different methods (applied on the collisional part) to emphasize the advantages of the stabilized Runge-Kutta methods. In Figure \ref{fig:5b_enelec}, we plot the time history of the electric energy in semi-log scale for different methods. A Strang splitting between the transport in space, in velocity, and the collision operator is used, where transports in space and in velocity are treated using semi-Lagrangian methods. 

First, in Figure \ref{fig:5b_enelec}a, we take $N_{v_x}=N_{v_y}=96$. For the the implicit method (implicit Euler only for the diffusion part of the collision operator) and the RKC methods we use $\Delta t=0.5$, while RK2 is run with $\Delta t=0.05$ for stability reasons. For RKC methods, we choose the smallest number of stages such that the scheme is stable ($s=3$ for RKC1 and $s=5$ for RKC2). For the explicit methods the decay stops before machine precision which means that the solution converges to a numerical equilibrium which is ${\cal O}(\Delta v^2)$ far from the exact equilibrium as expected.  Even if the implicit step might seem to be more optimized, the cost involved by this step turns out to be much more important compared to the explicit stabilized methods (factor $100$ in our (non optimized) implicit solver). On the other side, using a standard Runge-Kutta method (RK2) 
needs the use of small time steps to ensure stability. Finally, RKC1 or RKC2 can run with time steps comparable to the ones used by implicit methods and avoid solving large linear systems. 

In Figure \ref{fig:5b_enelec}b, we consider $N_{v_x}=N_{v_y}=128$ and we use $\Delta t=0.5$ for RKC methods ($s=4$ for RKC1 and $s=7$ for RKC2), while RK2 requires $\Delta t=0.025$. We can see that, for explicit methods,  the convergence to the Maxwellian state can be improved (to almost machine precision) by refining the velocity mesh. This shows again the high importance of stabilized methods since finer meshes increase the stiffness of the system which in turn increases the restriction on the time step for standard explicit methods, while stabilized methods can still run with the same large time steps but with just a few more internal stages. 

In Figure \ref{fig:f2d} (right), the velocity dependence of $f$ at time $t=50$ and $x=\pi/k$ is plotted which shows the relaxation of the non equilibrium initial condition 
to the Maxwellian state due to the effect of the collisions.


\begin{figure}[h!]
	\centering
	\includegraphics[width=\linewidth]{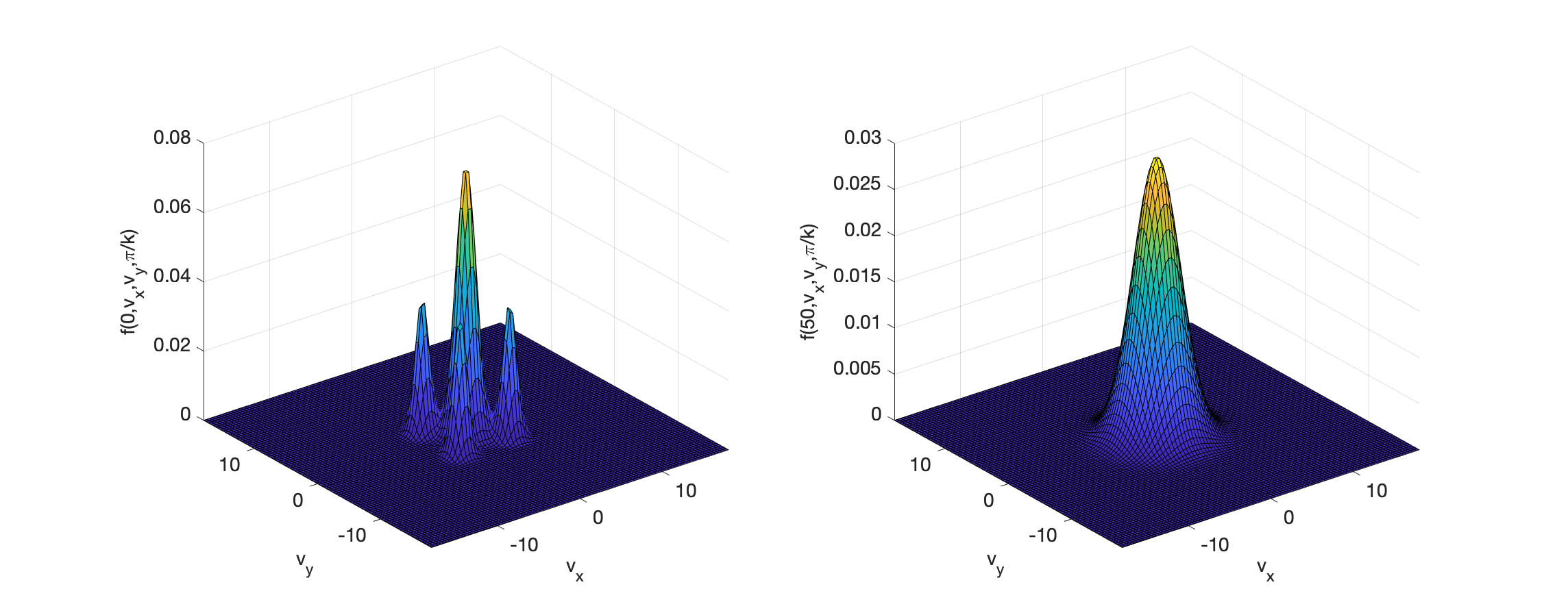}
	\caption{$N_x=32,~N_{v_x}=N_{v_y}=96,~v_{\max}=18$, $\nu=0.1$. Initial data $f(t=0, v_x, v_y, x=\pi/k)$ and final solution $f(t=50, v_x, v_y, x=\pi/k)$ .}
	\label{fig:f2d}
\end{figure}

\begin{figure}[h!]
	\centering
	\begin{subfigure}{0.49\linewidth}
		\includegraphics[width=\linewidth]{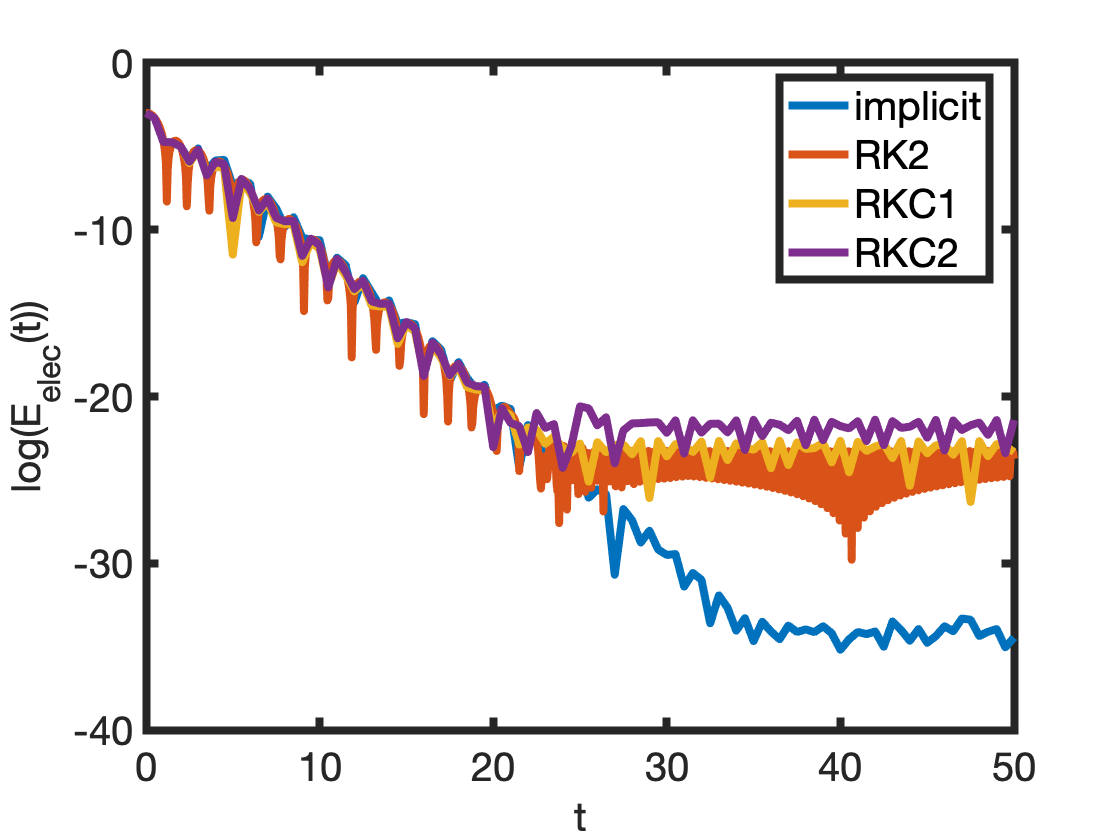}
		\caption{$N_{v_x}=N_{v_y}=96$.}
	\end{subfigure}
	\begin{subfigure}{0.49\linewidth}
		\includegraphics[width=\linewidth]{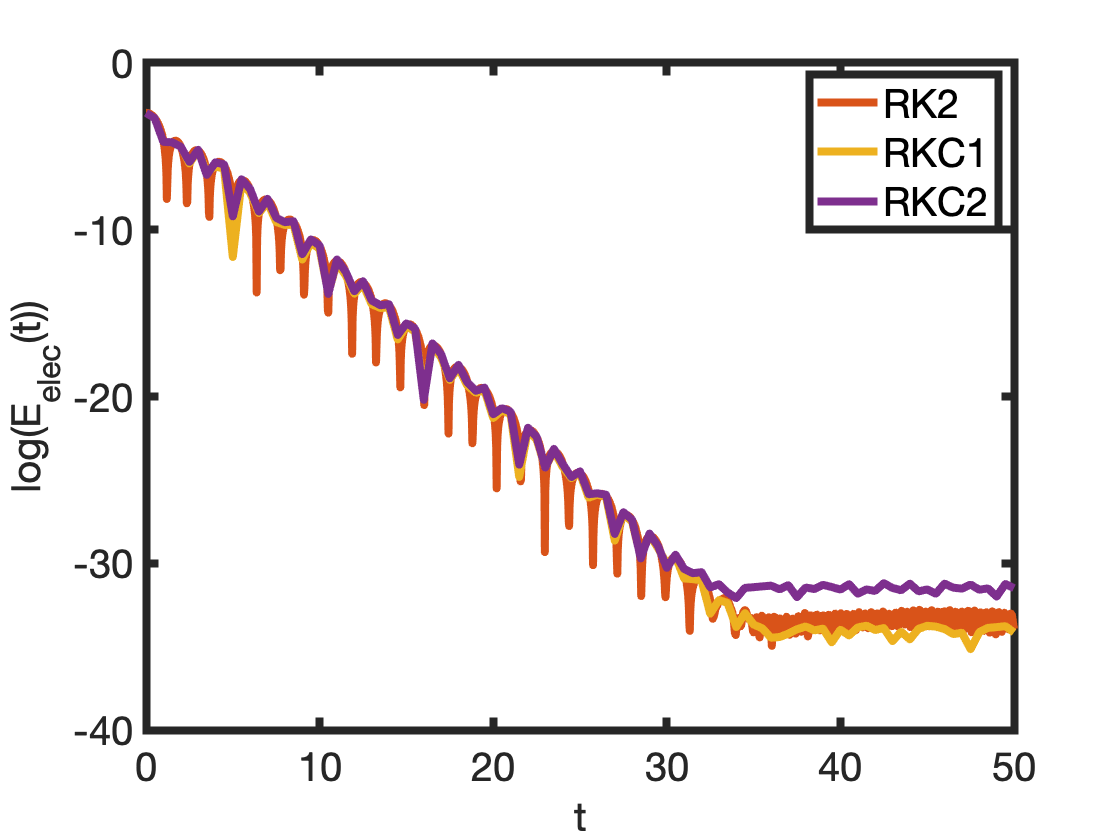}
		\caption{$N_{v_x}=N_{v_y}=128$.}
	\end{subfigure}
	\caption{Test 2 (1dx-2dv): $N_x=32,~v_{\max}=18$. Comparison of the time evolution of the electric energy in semi-log scale for different time integrators: RK2 ($\Delta t=0.05$ (left) and $\Delta t=0.025$ (right)), implicit ($\Delta t=0.5$), RKC2 ($\Delta t=0.5$), RKC1  ($\Delta t=0.5$).}
	\label{fig:5b_enelec}
\end{figure} 

\section{Conclusion}
In this work, we used stabilized Runge-Kutta methods in the context of the numerical simulation of Vlasov-Fokker-Planck equations. 
These time integrators are particularly well adapted to the weakly collisional kinetic equations since they are explicit and allow the 
use of large time steps in contrast to classical implicit methods (which require to solve large linear systems) or explicit methods (which 
need to use small time steps for stability reasons). Regarding the Fokker-Planck operator, a new discretization is proposed based on  
\cite{dellacherie} and a new second time integrator is presented for the time integration of the Vlasov-Poisson-Fokker-Planck equation which 
enables to preserve the total energy, even when coupled with adaptive time step selection strategy. Several tests illustrate the very good behavior of the approach 
(both in terms of accuracy and in terms of computational cost) compared to standard approaches. 

Several extensions like the coupling with the Maxwell equations and taking into account the nonlinear Landau operator 
can be considered.  

\paragraph{Acknowledgements}
The work of I.A. is supported by the Swiss National Science Foundation, grants No. P2GEP2 195212. and P500PT\_210968. The author N.C. has been supported by the EUROfusion Consortium and has received funding from the Euratom research and training programme under grant agreement No. 633053. The views and opinions expressed herein do not necessarily reflect those of the European Commission.

\appendix 

\section{Proof of Proposition \ref{prop_conservation}}
\label{proof_prop}
This part is dedicated to the proof of Proposition \ref{prop_conservation}. 

First, we prove the conservation property. 
Thanks to the fact that  \eqref{form1} can be written in a flux form \eqref{flux_form1}, the mass conservation follows easily. 
For the conservation of momentum, using the definition \eqref{def_UT} of $\tilde{u}$, we have
\begin{eqnarray*}
	\Delta v\sum_j Q_j v_j  &=&  \sum_j v_j \Big[ {f}_{j+1/2} (v_{j+1/2}-\tilde{u}) - {f}_{j-1/2} (v_{j-1/2}-\tilde{u}) + \frac{\tilde{T}}{\Delta v} (f_{j+1} - 2 f_j + f_{j-1})\Big] \nonumber\\
	&=& \sum_j (v_{j+1/2}-\frac{\Delta v}{2}) {f}_{j+1/2} (v_{j+1/2}-\tilde{u}) - \sum_j (v_{j-1/2}+\frac{\Delta v}{2}) {f}_{j-1/2} (v_{j-1/2}-\tilde{u}) \nonumber\\
	&=&  \sum_j v_{j+1/2} {f}_{j+1/2} (v_{j+1/2}-\tilde{u}) -  \sum_j v_{j-1/2} {f}_{j-1/2} (v_{j-1/2}-\tilde{u}) = 0, 
\end{eqnarray*}
where we used the definition \eqref{def_UT} of $\tilde{u}$ and discrete integration by parts.

For the conservation of energy, we insert $v^2_{j} =(v_{j\pm1/2} \mp \Delta v/2)^2$ to get 
\begin{eqnarray*}
	\Delta v \sum_j Q_j \frac{v^2_j}{2}  &=& \frac{1}{2}\sum_j (v^2_{j+1/2}+\frac{\Delta v^2}{4} - v_{j+1/2}\Delta v)  f_{j+1/2} (v_{j+1/2} - \tilde{u})  \nonumber\\
	&& -\frac{1}{2}\sum_j (v^2_{j-1/2}+\frac{\Delta v^2}{4} + v_{j-1/2}\Delta v)  f_{j-1/2} (v_{j-1/2} - \tilde{u})  +   \tilde{T}\Big(\sum_j f_j \Delta v\Big).  
\end{eqnarray*}
Using the definition of $\tilde{u}$, we obtain  
\begin{eqnarray*} 
	\Delta v \sum_j Q_j \frac{v^2_j}{2}  
	&=& - \frac{\Delta v}{2} \sum_j  (v_{j+1/2}   - \tilde{u} +  \tilde{u}) f_{j+1/2} (v_{j+1/2} - \tilde{u})  \nonumber\\ 
	&&  -\frac{\Delta v}{2} \sum_j  (v_{j-1/2}  - \tilde{u} +  \tilde{u}) f_{j-1/2} (v_{j-1/2} - \tilde{u}) + \tilde{T} n\nonumber\\
	&=& -\frac{\Delta v}{2} \sum_j  f_{j+1/2} (v_{j+1/2} - \tilde{u})^2  -\frac{\Delta v}{2} \sum_j   f_{j-1/2} (v_{j-1/2} - \tilde{u})^2 + \tilde{T} n \nonumber\\
	&=& -\Delta v \sum_j  f_{j+1/2} (v_{j+1/2} - \tilde{u})^2 + n\tilde{T},  
\end{eqnarray*}
which is equal to zero thanks to the preservation of momentum and using the definition \eqref{average_f} of $\tilde{T}$.  

Regarding the entropy inequality, thanks to the equivalence between \eqref{form1} and \eqref{form2}, we consider \eqref{form2} to get 
\begin{eqnarray*}
	\sum_{j\in \mathbb{Z}} Q_j  \Big(\frac{f}{M}\Big)_{j} \Delta v &=& \sum_{j\in \mathbb{Z}} \frac{\tilde{T}}{\Delta v^2} \left\{ \tilde{M}_{j+1/2}\Big[\Big(\frac{f}{M}\Big)_{j+1} -\Big(\frac{f}{M}\Big)_{j} \Big] \right\}  \Big(\frac{f}{M}\Big)_{j} \Delta v \nonumber\\
	&&-   \sum_{j\in \mathbb{Z}} \frac{\tilde{T}}{\Delta v^2} \left\{\tilde{M}_{j-1/2}\Big[\Big(\frac{f}{M}\Big)_{j} -\Big(\frac{f}{M}\Big)_{j-1} \Big] \right\}   \Big(\frac{f}{M}\Big)_{j} \Delta v\nonumber\\
	&=&  \sum_{j\in \mathbb{Z}} \frac{\tilde{T}}{\Delta v^2} \left\{ \tilde{M}_{j+1/2}\Big[\Big(\frac{f}{M}\Big)_{j+1} -\Big(\frac{f}{M}\Big)_{j} \Big] \right\}  \Big(\frac{f}{M}\Big)_{j} \Delta v \nonumber\\
	&&-   \sum_{j\in \mathbb{Z}} \frac{\tilde{T}}{\Delta v^2} \left\{\tilde{M}_{j+1/2}\Big[\Big(\frac{f}{M}\Big)_{j+1} -\Big(\frac{f}{M}\Big)_{j} \Big] \right\}   \Big(\frac{f}{M}\Big)_{j+1} \Delta v\nonumber\\
	&=& -\sum_{j\in \mathbb{Z}} \frac{\tilde{T}}{\Delta v^2} \left\{ \tilde{M}_{j+1/2}\Big[\Big(\frac{f}{M}\Big)_{j+1} -\Big(\frac{f}{M}\Big)_{j} \Big]^2 \right\}  \Delta v \leq  0. 
\end{eqnarray*}
To guarantee the last inequality, we need to ensure that $\tilde{M}_{j+1/2}>0$. To do so, we prove in the sequel $\tilde{M}_{j+1/2} = M(v_{j+1/2}) + {\cal O}(\Delta v^2)$ 
(with $M$ given by \eqref{def_maxw}) so that for $\Delta v$ small enough, we can conclude $\tilde{M}_{j+1/2} >0$ since $M(v_{j+1/2})>0$. 



To get the consistency of $\tilde{M}_{j+1/2}$, we consider \eqref{def_M} with $f_j$ replaced by $f(v_j)$ 
and perform Taylor expansions. By an abuse of notations, we will still denote $f(v_j)$ by $f_j$ (same for $M$) 
which allows us to perform Taylor expansions using $f'_j$ for the velocity derivative of $f$ at $v_j$.  

Then, we perform Taylor expansions to reformulate $f_{j+1}M_j - f_jM_{j+1}$ as 
\begin{eqnarray*} 
	f_{j+1}M_j \!-\! f_jM_{j+1}  
	&=&\Delta v (f'_j M_{j+1/2} - f_j M'_{j+1/2} + \frac{\Delta v}{2} (f''_j M_{j+1/2} - f'_j M'_{j+1/2})) +\!{\cal O}(\Delta v^3). 
\end{eqnarray*} 
Using $M_jM_{j+1}+M^2_{j+1/2} + {\cal O}(\Delta v^2)$, the second term in \eqref{def_M} thus becomes 
\begin{eqnarray*} 
	\frac{M_jM_{j+1} (f_{j+1} - f_j)}{f_{j+1}M_j - f_jM_{j+1}} 
	&=&\frac{  M_{j+1/2} (f'_j M_{j+1/2}+ \frac{\Delta v}{2} f''_j M_{j+1/2})+ {\cal O}(\Delta v^3)}{ (f'_j M_{j+1/2} - f_j M'_{j+1/2} + \frac{\Delta v}{2} (f''_j M_{j+1/2} - f'_j M'_{j+1/2})) +\!{\cal O}(\Delta v^3)}. 
\end{eqnarray*} 
For the first term of \eqref{def_M}, using $M'_{j+1/2} = -\Big((v_{j+1/2}-\tilde{u})/\tilde{T}\Big) M_{j+1/2}$, we get  
\begin{eqnarray*} 
	\frac{\Delta v}{\tilde{T}}  \frac{M_jM_{j+1} f_{j+1/2} (v_{j+1/2} - \tilde{u})}{f_{j+1}M_j - f_jM_{j+1}} &=&\frac{(M^2_{j+1/2} + {\cal O}(\Delta v^2))( f_j +  \frac{\Delta v}{2} f'_j + {\cal O}(\Delta v^2)) (v_{j+1/2}-\tilde{u})}{\tilde{T} (f'_j M_{j+1/2} - f_j M'_{j+1/2} + \frac{\Delta v}{2} (f''_j M_{j+1/2} - f'_j M'_{j+1/2})) +\!{\cal O}(\Delta v^3)}\nonumber\\
	&=& \frac{M_{j+1/2}(- f_j M'_{j+1/2}- \frac{\Delta v}{2} f'_j M'_{j+1/2})+ {\cal O}(\Delta v^2) }{ (f'_j M_{j+1/2} - f_j M'_{j+1/2} + \frac{\Delta v}{2} (f''_j M_{j+1/2} - f'_j M'_{j+1/2})) +\!{\cal O}(\Delta v^3)}.  
\end{eqnarray*} 
Gathering now the two terms of \eqref{def_M} leads to 
\begin{eqnarray*} 
	\tilde{M}_{j+1/2} &=&\frac{  M_{j+1/2} (f'_j M_{j+1/2}+ \frac{\Delta v}{2} f''_j M_{j+1/2}- f_j M'_{j+1/2}-  \frac{\Delta v}{2} f'_j M'_{j+1/2})+{\cal O}(\Delta v^2)}{ (f'_j M_{j+1/2} - f_j M'_{j+1/2} + \frac{\Delta v}{2} (f''_j M_{j+1/2} - f'_j M'_{j+1/2})) +\!{\cal O}(\Delta v^3)} \nonumber\\
	&=& M_{j+1/2}  + {\cal O}(\Delta v^2). 
\end{eqnarray*} 

\section{Extension of the Fokker-Planck discretization in the two-dimensional case}
\label{extension_2d}
In this Appendix, we extend the numerical scheme presented in the one-dimensional case to the two-dimensional case. 
We denote $v=(v_x, v_y)$ the velocity variable  so that the Fokker-Planck operator is 
$$
Q(f)(v) = \partial_{v_x} [(v_x-u_{f, x}) f + T\partial_{v_x} f]  + \partial_{v_y} [(v_y-u_{f, y}) f + T_f\partial_{v_y} f],  
$$
where $n_f=\int f dv_x dv_y, n_f u_{f, x/y} = \int v_{x/y} f dv_x dv_y$ and $n_f T_f=\int |v-u|^2 f dv_x dv_y (u=(u_x, u_y))$.  
The velocity space is discretized by $v_{x, i} = i\Delta v_x, v_{y, j} = j\Delta v_y, i, j\in \mathbb{Z}$ and the discretization of the Fokker-Planck operator is 
\begin{eqnarray}
Q(f)(v_{x,i}, v_{y,j}) &\approx& Q_{i,j} \nonumber\\
\hspace{-2cm}&\hspace{-2cm}\hspace{-2cm}&\hspace{-2cm}= \frac{1}{\Delta v_x} \Big[f_{i+1/2,j} (v_{x,i+1/2}-\tilde{u}_x) - f_{i-1/2,j} (v_{x,i-1/2}-\tilde{u}_x)  \Big] + \frac{\tilde{T}}{\Delta v_x^2}(f_{i+1,j}-2f_{i,j}+f_{i-1,j}), \nonumber\\
&& \hspace{-2cm}+ \frac{1}{\Delta v_y} \Big[f_{i,j+1/2} (v_{y,j+1/2}-\tilde{u}_y) - f_{i,j-1/2} (v_{y,j-1/2}-\tilde{u}_y)  \Big] + \frac{\tilde{T}}{\Delta v_y^2}(f_{i,j+1}-2f_{i,j}+f_{i,j+1})\nonumber
\end{eqnarray}
with $f_{i+1/2,j} = (f_{i+1,j} + f_{i,j})/2$, $f_{i, j+1/2} = (f_{i,j+1} + f_{i,j})/2$ and 
\begin{eqnarray} 
n&=& \sum_{i,j} f_{i,j} \Delta v_x \Delta v_y, \nonumber\\ 
n\tilde{u}_x &=& \sum_{i,j} v_{x,i+1/2}f_{i+1/2,j} \Delta v_x \Delta v_y, \;\;\;\; n\tilde{u}_y = \sum_{i,j} v_{y,j+1/2}f_{i,j+1/2} \Delta v_x \Delta v_y, \nonumber\\ 
2n\tilde{T}&=& \sum_{i,j} (v_{x,i+1/2}-\tilde{u}_x)^2 f_{i+1/2,j} \Delta v_x \Delta v_y +\sum_{i,j} (v_{y,j+1/2} - \tilde{u}_y)^2f_{i,j+1/2} \Delta v_x \Delta v_y, 
\end{eqnarray}
where $v_{x,i+1/2} = (v_{x, i}+v_{x, i+1})/2$, $v_{y,j+1/2} = (v_{y, j}+v_{y, j+1})/2$. 
Similar expressions are obtained for the fourth order discretization.

\bibliographystyle{abbrv}
\bibliography{refs}

\end{document}